\newtheorem{theorem}{Theorem}[section]
\newtheorem{lemma}[theorem]{Lemma}
\theoremstyle{definition}
\begin{document}

\title[Multiplicative functions additive on cubes]{On multiplicative functions\\ which are additive on positive cubes}

\author{Poo-Sung Park}

\address{Department of Mathematics Education,
	Kyungnam University,
	Changwon, 51767,
	Republic of Korea
}
\email{pspark@kyungnam.ac.kr}

\thanks{This research was supported by Basic Science Research Program through the National Research Foundation of Korea (NRF) funded by the Ministry of Education (NRF-2021R1A2C1092930).}

\date{}

\begin{abstract}
Let $k \geq 3$. If a multiplicative function $f$ satisfies
\[
	f(a_1^3 + a_2^3 + \cdots + a_k^3) = f(a_1^3) + f(a_2^3) + \cdots + f(a_k^3)
\]
for all $a_1, a_2, \ldots, a_k \in \mathbb{N}$, then $f$ is the identity function. The set of positive cubes is said to be a $k$-additive uniqueness set for multiplicative functions. But, the condition for $k=2$ can be satisfied by infinitely many multiplicative functions.

Besides, if $k \geq 3$ and a multiplicative function $g$ satisfies
\[
	g(a_1^3 + a_2^3 + \cdots + a_k^3) = g(a_1)^3 + g(a_2)^3 + \cdots + g(a_k)^3
\]
for all $a_1, a_2, \ldots, a_k \in \mathbb{N}$, then $g$ is the identity function. However, when $k=2$, there exist three different types of multiplicative functions.
\end{abstract}

\subjclass[2020]{Primary 11A25; Secondary 11N64}

\keywords{$k$-additive uniqueness, multiplicative function, positive cube}

\maketitle

\section{Introduction}
The additive uniqueness, briefly AU, was initiated by Spiro in 1992 \cite{Spiro}. She call a set $E \subset \mathbb{N}$ a AU set for a set $S$ of arithmetic functions if the condition
\[
	f \in S \text{ and } f(a+b) = f(a)+f(b)\text{ for all } a,b \in E
\] 
determines $f$ uniquely. She showed that the set of prime numbers is AU for multiplicative functions $f$ with $f(p_0) \ne 0$ for some $p_0$ prime.

Two decades later, Fang \cite{Fang} and Dubickas and \v{S}arka \cite{DS} showed that the set of prime numbers with condition
\[
	f(p_1 + p_2 + \cdots + p_k) = f(p_1) + f(p_2) + \cdots + f(p_k)\text{ for fixed }k \geq 3
\]
also determines the multiplicative function $f$ uniquely if $f(p_0) \ne 0$ for some $p_0$ prime. Henceforth, let us call such a set $k$-AU.

In 1996 Chung  \cite{Chung1996} studied condition
\[
	f(m^2 + n^2) = f(m^2) + f(n^2)\text{ for all }m,n \in \mathbb{N}
\]
and showed that the set of positive squares is not AU for multiplicative functions.

The author, two decades later, showed that the set of positive squares is $k$-AU for $k \geq 3$ \cite{Park2018AequatMath}. This is the first case for $k$-AU but not $\ell$-AU with $\ell < k$.

In this paper we show that the set of positive cubes is a $k$-additive uniqueness set, like the set of positive squares, for multiplicative functions when $k \geq 3$.

\begin{theorem}\label{thm:main}
The set of positive cubes is $k$-AU for $k \geq 3$. That is, if a multiplicative function $f_k$ satisfies the condition
\[
	f_k(a_1^3 + a_2^3 + \cdots + a_k^3)
	= f_k(a_1^3) + f_k(a_2^3) + \cdots + f_k(a_k^3)
\]
for all $a_1, a_2, \ldots, a_k \in \mathbb{N}$, then $f_k$ is the identity function.
\end{theorem}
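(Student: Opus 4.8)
Write $f=f_k$. The plan is to reduce the whole statement to the single assertion that $f(p^j)=p^j$ for every prime $p$ and every $j\ge 1$; since $f$ is multiplicative, this already forces $f$ to be the identity. The elementary normalizations come first: $f$ is multiplicative and not identically zero, so $f(1)=1$, and taking $a_1=\dots=a_k=1$ in the hypothesis gives $f(k)=k$. Fixing $a_{j+1}=\dots=a_k=1$ for $0\le j\le k$ yields the flexible identity
\[
f(a_1^3+\dots+a_j^3+(k-j))=f(a_1^3)+\dots+f(a_j^3)+(k-j),
\]
whose instance $j=1$, namely $f(a^3+k-1)=f(a^3)+k-1$, gets used constantly. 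The second recurring device is multiplicativity applied to a sum of cubes: whenever $N=a_1^3+\dots+a_k^3$ factors as $N=uv$ with $\gcd(u,v)=1$, one gets $f(u)\,f(v)=f(a_1^3)+\dots+f(a_k^3)$, which determines the last of these values once the others are known.

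The first real task is to pin down $f$ on an initial block of prime powers. Running through the hypothesis with $a_i$ drawn from $\{1,2,3,4,5,\dots\}$ and combining the additive identity with the factorization identity produces a finite, if intricate, system of equations in $f(2),f(4),f(8),f(16),f(64),f(3),f(9),f(27),f(5),f(7),\dots$; for example, for $k=3$ the relations coming from $8+8+1=17$ and $64+64+8=8\cdot 17$ already force $f(2^3)^2=f(2^6)$, those from $64+64+1=3\cdot 43$ and $27+8+8=43$ force a relation between $f(8)$ and $f(27)$, and so on. Solving this system yields $f(p^j)=p^j$ for all the small prime powers occurring, in particular $f(2)=2$, $f(3)=3$, $f(5)=5$, $f(7)=7$. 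One also has to discard a spurious branch in which $f$ vanishes at some cube: feeding such a zero back into the identity contradicts $f(k)=k$.

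The core is then an induction extending this to all of $\mathbb{N}$. If $n=a_1^3+\dots+a_k^3$ with all $a_i\ge 1$, then each $a_i^3\le n-(k-1)<n$, so $f(n)=\sum_i a_i^3=n$ as soon as $f$ is known below $n$; and if $n=uv$ with $1<u,v<n$ then $f(n)=f(u)f(v)=n$. Hence only prime powers $n=p^j$ that are \emph{not} sums of $k$ positive cubes genuinely need attention. For such an $n$ the aim is to exhibit a sum of $k$ positive cubes equal to $p^j m$ with $\gcd(m,p)=1$, chosen so that $m$ and every summand cube already have known $f$-values — which forces the $a_i$ to be built from prime powers that are already resolved, not merely to be small integers — and then conclude from $f(p^j)\,m=f(p^j)f(m)=\sum_i f(a_i^3)=\sum_i a_i^3=p^j m$ that $f(p^j)=p^j$. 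To keep this well founded one has to order the argument by the cubes involved (equivalently, first settle $f$ on all cubes $p^{3e}$, after which sums of cubes and hence the remaining prime powers follow) rather than naively by $n$.

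The existence of the needed representation is where the difficulty is concentrated. When $k$ is large one can lean on Waring-type results for cubes — $G(3)\le 7$ (Linnik), and that almost every integer is a sum of four positive cubes (Davenport) — so that the exceptional prime powers form a finite or very thin set and the construction is easy or unnecessary. For $k=3$ this escape is closed: sums of three positive cubes have density zero, so a generic multiple of $p^j$ is not one, and one must build the representation by hand — solving the cubic congruence $a_1^3+a_2^3+a_3^3\equiv 0\pmod{p^j}$ with exact $p$-adic valuation $j$ while confining $a_1,a_2,a_3$ to a fixed admissible family (so that their cubes are already evaluated) and keeping the cofactor $m$ inside the inductive hypothesis. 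Carrying out that construction — for which smooth-number or equidistribution estimates look like the natural tools — is the step I expect to be the main obstacle; the rest is normalization or finite verification.
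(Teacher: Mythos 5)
Your reduction to prime powers, the finite system for small prime powers, and your treatment of $k\ge 4$ all follow the paper's route (for $k\ge 4$ the paper makes your ``thin exceptional set'' remark precise with a density count: $p^r\bigl([N]_p\cap\Sigma^3_4\bigr)$ has relative density $\tfrac1{p^r}\bigl(1-\tfrac1p\bigr)>0$ in $[p^rN]$ while $\mathcal{E}_4(p^rN)$ has density $0$, so some $M$ coprime to $p$ exists with both $M$ and $p^rM$ sums of four positive cubes). But for $k=3$ you have only named the obstacle, not overcome it, and that is where the actual content of the theorem lies. The congruence-plus-equidistribution construction you gesture at is not carried out, and it is not what the paper does; no smooth-number or equidistribution input is needed. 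The missing idea is an explicit algebraic identity of Richmond (1923): for every positive rational $R$,
\begin{align*}
R\bigl(6\lambda^2(\lambda^3+3R)^2\bigr)^3
&=\bigl(-(\lambda^3+3R)(\lambda^6-30\lambda^3R+9R^2)\bigr)^3\\
&\quad+\bigl(\lambda^9+45\lambda^6R-81\lambda^3R^2+27R^3\bigr)^3
+\bigl(36\lambda^3(3R-\lambda^3)R\bigr)^3,
\end{align*}
with all three cubes positive when $0.43376\ldots\times 3R<\lambda^3<3R$. Taking $R=p^r$, this interval has length greater than $1$ in $\lambda$ once $p^r$ is not too small (the few remaining cases are checked individually), so one can choose an integer $\lambda$ with $p\nmid\lambda$, hence $p\nmid N$ where $N=6\lambda^2(\lambda^3+3p^r)^2$. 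This exhibits $p^rN^3$ as a sum of three positive integer cubes whose cofactor $N^3$ is itself a perfect cube coprime to $p$ --- exactly the representation you were trying to manufacture by hand.

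Even granted that representation, you still need the second ingredient: that $f_3$ already fixes \emph{every} cube $n^3$, so that $f_3(N^3)$ and the three summands on the right-hand side are known quantities. The paper obtains this from the two-sided identity $m^3+(5m)^3+(5m\pm2)^3=(2m\mp1)^3+(3m\pm2)^3+(6m\pm1)^3$ applied to $n=6m\pm1$, together with separate inductions for the powers of $2$ and $3$; your proposal's ``fixed admissible family'' does not supply this. With both pieces in place, $f_3(p^r)\,N^3=f_3(p^rN^3)=\sum_i a_i^3=p^rN^3$ closes the argument. Without Richmond's identity (or an equivalent explicit supply of representations of $p^r\cdot(\text{known cube})$ as a sum of three positive cubes), your proof of the $k=3$ case is incomplete.
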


But, $f_2$ is not determined uniquely. So, we proceed as follows:
\begin{itemize}
\item[\S2] Classify the multiplicative functions $f_2$.
\item[\S3] Show that $f_3$ is the identity function.
\item[\S4] Show that $f_4$ is the identity function.
\item[\S5] Show that $f_k$ is the identity function with $k \geq 5$.
\end{itemize}

In \S6 we consider a variant problem: For fixed $k \geq 2$, if a multiplicative function $g_k$ satisfies
\[
	g_k(a_1^3 + a_2^3 + \cdots + a_k^3) = g_k(a_1)^3 + g_k(a_2)^3 + \cdots + g_k(a_k)^3
\]
for all $a_1, a_2, \ldots, a_k \in \mathbb{N}$, then is $g_k$ determined uniquely? If we set $K(x_1, \ldots, x_k) = x_1^3 + \cdots + x_k^3$, this problem is to find a multiplicative function $g_k$ commutable with $K$.

In 2014 Ba\v{s}i\'c \cite{Basic} showed that various multiplicative function $g$ satisfies
\[
	g(a^2 + b^2) = g(a)^2 + g(b)^2
\]
and the author, a couple of years later, showed that a multiplicative function $g$ is the identity function if $k \geq 3$ is fixed and 
\[
	g(a_1^2 + a_2 + \cdots + a_k^2) = g(a_1)^2 + g(a_2)^2 + \cdots + g(a_k)^2
\]
holds for all  $a_1, a_2, \ldots, a_k \in \mathbb{N}$ \cite{Park2018IJNT}.

In this section, we show that $g_k$ is uniquely determined for $k \geq 3$ and $g_2$ is one of three functions.

\section{non $2$-additive uniqueness}

The set of positive cubes is not (2-)AU for multiplicative functions. That is, although a multiplicative function $f_2$ satisfies
\[
	f_2(a^3+b^3) = f_2(a^3) + f_2(b^3)
\]
for all $a,b \in \mathbb{N}$, $f_2$ may not be the identity function.

Since $n^3 \equiv 0, 1, -1 \pmod{9}$, the sum of two cubes cannot be congruent to $\pm3$ modulo $9$. If we want to determine $f_2(3)$, we should use $f_2(3m)$ with $3 \nmid m$. But, since $3m \equiv \pm3 \pmod{9}$, $3m$ cannot be written as a sum of two positive cubes. So, for example, if we define a multiplicative function $f: \mathbb{N} \to \mathbb{C}$ by
\[
	f(n) =
	\begin{cases}
		0 & \text{ if }3 \mid n\text{ and }3^2 \nmid n, \\
		n & \text{ otherwise,}
	\end{cases}
\]
this function satisfies the additive condition but is not the identity function.

In general, we can characterize the function $f_2$ completely.

\begin{theorem}\label{thm:f2}
If a multiplicative function $f_2$ satisfies
\[
	f_2(a^3+b^3) = f_2(a^3) + f_2(b^3)
\]
for $a, b \in \mathbb{N}$, then $f_2(n) = n$ unless $3\,|\,n$ and $3^2 \nmid n$. We can set the value $f_2(3)$ arbitrarily. 
\end{theorem}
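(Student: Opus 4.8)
The plan is to bootstrap from small cases, using the multiplicativity of $f_2$ together with carefully chosen cube sums, to pin down $f_2$ on prime powers. First I would establish $f_2(2) = 2$: since $1^3 + 1^3 = 2$ and $f_2(1) = 1$ (a multiplicative function is nonzero somewhere, and $f_2(1) = f_2(1)^2$ forces $f_2(1)=1$ unless $f_2 \equiv 0$, which is excluded because $f_2(2) = 2f_2(1)$ would then be $0$ while also $f_2(16) = f_2(8) + f_2(8)$ etc.\ give contradictions), we get $f_2(2) = 2$. Next, chaining identities like $1^3 + 2^3 = 9$, $2^3 + 2^3 = 16$, $1^3 + 3^3 = 28 = 4 \cdot 7$, $2^3 + 3^3 = 35 = 5 \cdot 7$, $3^3 + 3^3 = 54 = 2 \cdot 27$, $1^3 + 4^3 = 65 = 5 \cdot 13$, and so on, I would solve a cascading linear system for $f_2(3^3), f_2(7), f_2(5), f_2(13), \ldots$; each new equation introduces at most one new prime value, so the system is triangular and forces $f_2(p) = p$ for these primes and $f_2(8) = 8$, $f_2(27) = 27$, etc.

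The substantive step is to show $f_2(p) = p$ for \emph{every} prime $p \neq 3$ and $f_2(3^j) = 3^j$ for all $j \geq 2$, i.e.\ to propagate the base cases to all prime powers. The key device is that for any $n$ with $3 \nmid n$ or $9 \mid n$, one can represent suitable multiples of $n$ (or numbers built from $n$) as sums of two positive cubes in more than one way, or as a sum of two cubes where the complementary factor has already been determined; then multiplicativity reads off $f_2(n)$. Concretely, to handle a prime $p$ I would look for an identity $a^3 + b^3 = p \cdot m$ with $\gcd(p,m) = 1$ and $f_2(m)$, $f_2(a^3)$, $f_2(b^3)$ already known — the factorization $a^3 + b^3 = (a+b)(a^2 - ab + b^2)$ helps generate such $m$. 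The analogous argument with $3^j \mid a^3 + b^3$ (taking $a \equiv -b \pmod{3^{\lceil j/3\rceil}}$ so that $3^j$ divides the sum) handles higher powers of $3$. I expect the main obstacle to be an \emph{inductive/covering argument}: showing that the primes and prime powers one can reach this way actually exhaust all of them — in other words, that no prime $p \neq 3$ is "isolated" the way $3$ itself is (isolated precisely because $n^3 \equiv 0, \pm 1 \pmod 9$ makes $a^3 + b^3 \not\equiv \pm 3 \pmod 9$). One must verify that for $p \neq 3$ there is no congruence obstruction blocking all useful representations, which is where the bulk of the casework lives.

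Finally, the freedom in $f_2(3)$: the congruence observation already in the text shows that the value $f_2(3)$ never enters any instance of the functional equation, because any $n$ with $3 \| n$ satisfies $n \equiv \pm 3 \pmod 9$ and hence is not a sum of two positive cubes, and moreover such an $n$ never appears as $a^3$ or $b^3$ (cubes are divisible by $3$ only when divisible by $27$) nor is $f_2(3)$ forced through multiplicativity from any determined value (since $f_2(3^j) = 3^j$ for $j \geq 2$ does not constrain $f_2(3)$). So once $f_2$ is shown to equal the identity on all prime powers $p^j$ with $p \neq 3$ and on $3^j$ with $j \geq 2$, the value $f_2(3)$ may be assigned freely and the resulting multiplicative function still satisfies the hypothesis — which is exactly the assertion of the theorem.
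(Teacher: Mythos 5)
Your overall strategy --- pin down small values by a triangular system of cube-sum identities, then use the factorization $a^3+b^3=(a+b)(a^2-ab+b^2)$ and strong induction on primes, with the mod-$9$ obstruction isolating $3$ --- matches the paper's, and your closing argument that $f_2(3)$ is genuinely free (no sum of two positive cubes is exactly divisible by $3$, and a cube divisible by $3$ is divisible by $27$) is correct. But there are two real gaps. First, your plan for $f_2(p)$ is circular as stated: writing $r^3+(r+1)^3 = p\,(r^2+r+1)$ for $p=2r+1$, the cofactor $m=r^2+r+1\approx p^2/4$ is \emph{not} ``already known'' from the induction hypothesis, since it can have prime factors larger than $p$ (e.g.\ $p=5$ gives $m=7$). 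The paper resolves this by pairing two representations that share this cofactor: $(r+1)^3+1^3=(r+2)(r^2+r+1)$ determines $f_2(r^2+r+1)$ first (using $r+2<p$ and $\gcd(r+2,r^2+r+1)=1$ for $p\ne 3$), and only then does $(r+1)^3+r^3$ yield $f_2(p)=p$. You gesture at ``casework'' here, but this pairing is the actual idea and it is absent from the proposal.

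Second, and more seriously, you never address $f_2(p^r)$ for $p\ge 5$ and $r\ge 2$, which is where most of the paper's work lies: a multiplicative function is determined by its values on all prime powers, not just on primes. A single identity $a^3+b^3=p\,m$ reaches only the first power; to reach $p^{3m+1}$, $p^{3m+2}$, $p^{3(m+1)}$ the paper must first show that $f_2$ fixes the auxiliary quantities $p^2-p+1$ (or its third), $p^2+3$, $(p^2+1)^3$ and $p^4+3$ (Lemmas~\ref{lem:f2(p^3),f2(p^2-p+1)} and \ref{lem:f2_fixes_some_patterns}, with sub-cases depending on $p \bmod 3$ and $p \bmod 7$ to control the relevant gcd's), and then run a separate induction on $r$ via identities such as
\[
	p^{3m}\left(\frac{p+1}{2}\right)^3+p^{3m}\left(\frac{p-1}{2}\right)^3=p^{3m+1}\cdot\frac{p^2+3}{4}.
\]
Your remark about choosing $a\equiv -b$ modulo a power of $3$ concerns only the prime $3$ and does not generalize: for $p\ge 5$ one needs $p^r$ to exactly divide $a^3+b^3$ with the complementary factor and the cubes all under inductive control, and arranging that is precisely the content of the missing lemmas. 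So the proposal identifies the right skeleton but omits the arguments that constitute the bulk of the proof.
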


We will prove through the following process:
\begin{enumerate}
\item Determine $f_2(n)$ for some $n$'s.
\item Show that $f_2$ fixes $2^r$, $3^r$, and $7^r$ except for $3$.
\item Assume that $f_2$ fixes $q^r$ for all primes $q < p$ except for $f_2(3)$ to use induction on prime $p$.
\item Show that $f_2$ fixes $p$, $p^2-p+1$, $p^3$, $p^2+3$, $(p^2+1)^3$, $p^4+3$.
\item Show that $f_2(p^r) = p^r$.
\end{enumerate}

\begin{lemma}\label{lem:f2(n)=n_for_some_n}
$f_2(n) = n$ for $n = 1, 2, 2^2, 2^3, 3^2, 3^3, 5, 5^3, 7, 7^3, 13, 19$.
\end{lemma}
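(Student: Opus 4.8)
The plan is to first nail down $f_2$ at $1$ and $2$ together with two reductions for cubes of powers of $2$, then extract a closed polynomial system from a short list of integers written simultaneously as a sum of two positive cubes and as a product of primes from $\{2,3,5,7,13,19\}$, and finally solve that system. Multiplicativity gives $f_2(1)=1$, and $2=1^3+1^3$ gives $f_2(2)=2f_2(1)=2$. From $9=1^3+2^3$ we get $f_2(9)=1+f_2(2^3)$; substituting this into $72=2^3+4^3=2^3\cdot 3^2$ yields $f_2(4^3)=f_2(2^3)^2$; and combining $728=6^3+8^3=2^3\cdot 7\cdot 13$ with $91=3^3+4^3=7\cdot 13$ (to eliminate $f_2(7)f_2(13)$) yields $f_2(8^3)=f_2(2^3)^3$. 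Abbreviate $x:=f_2(2^3)$.

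I would then collect the relations furnished by $28=1^3+3^3=2^2\cdot 7$, $35=2^3+3^3=5\cdot 7$, $65=1^3+4^3=5\cdot 13$, $91=3^3+4^3=7\cdot 13$, $126=1^3+5^3=2\cdot 3^2\cdot 7$, $133=2^3+5^3=7\cdot 19$, $152=3^3+5^3=2^3\cdot 19$, $189=4^3+5^3=3^3\cdot 7$, $351=2^3+7^3=3^3\cdot 13$, $513=1^3+8^3=3^3\cdot 19$ and $855=7^3+8^3=3^2\cdot 5\cdot 19$. After applying $f_2$, multiplicativity and the two reductions above, each of these is a polynomial relation among $x$ and the seven unknowns $f_2(5),f_2(7),f_2(13),f_2(19),f_2(3^3),f_2(5^3),f_2(7^3)$.

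For the solving step, eliminating $f_2(3^3)$ from the three relations coming from $133$, $152$, $189$ yields the dichotomy $\bigl(x-f_2(7)-1\bigr)f_2(5^3)=0$. If $f_2(5^3)=0$, then $126$, $513$ and $f_2(8^3)=x^3$ force $(2x-1)(2x^3+3x^2+x+1)=0$, each root of which is excluded by the relation $f_2(5)f_2(13)=1+x^2$ from $65$. In the other case $x=f_2(7)+1$, the relations from $35$, $65$, $91$ and $513$ express $f_2(5),f_2(13),f_2(19),f_2(3^3),f_2(5^3)$ in terms of $x$ and collapse the system, after clearing the factor $f_2(7)$, to the single equation $x(x-1)^2(x^2+1)(x-8)=0$; the relation from $126$ then narrows this to $x\in\{1,8\}$. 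The case $x=1$, that is $f_2(7)=0$, passes every relation listed so far: $28$ and $35$ give $f_2(3^3)=-1$ and $x=f_2(2^3)=1$, then $126$ gives $f_2(5^3)=-1$, and $65$, $152$, $351$, $855$ force $f_2(13)^2=8$; to break this I would bring in the further identities $2198=1^3+13^3=2\cdot 7\cdot 157$, $2322=5^3+13^3=2\cdot 3^3\cdot 43$ and $559=6^3+7^3=13\cdot 43$, which successively force $f_2(13^3)=-1$, then $f_2(43)=1$, then $f_2(13)=-1$, contradicting $f_2(13)^2=8$. Hence $x=f_2(2^3)=8$, and back-substitution gives $f_2(9)=9$, $f_2(8^3)=8^3$, $f_2(7)=7$, $f_2(3^3)=27$, $f_2(5^3)=125$, $f_2(19)=19$, $f_2(5)=5$, $f_2(13)=13$; finally $28$ gives $f_2(4)=4$ and $351$ gives $f_2(7^3)=343$, completing the asserted list.

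The step I expect to be the real obstacle is the degenerate case $f_2(7)=0$ (equivalently $x=1$): it is consistent with all the naturally chosen ``small'' identities, so one must hunt for additional sum-of-two-cubes identities (here $2198$, $2322$, $559$) whose combination pins $f_2(13)$ to an impossible value. By contrast the branch $f_2(5^3)=0$ and the spurious algebraic roots of the univariate polynomial ($0$, $\pm i$, and the roots of $2x^3+3x^2+x+1$) are dispatched routinely once the relation from $65$ is in hand, and the rest is elementary arithmetic with sums of two positive cubes.
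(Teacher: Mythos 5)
Your proposal is correct and follows essentially the same route as the paper: both derive the values by turning representations of small integers as sums of two positive cubes (together with their factorizations into the primes $2,3,5,7,13,19,43,\dots$) into a polynomial system for the unknown values of $f_2$ and then solving it, the only difference being that the paper delegates the elimination to a \texttt{Mathematica} \texttt{Solve} call (using $344=1^3+7^3=2^3\cdot 43$ and $559=6^3+7^3=13\cdot 43$ where you use $2198$, $2322$, $559$) while you carry it out by hand and make the spurious branches, notably $f_2(7)=0$, explicit. Your resolution of that degenerate branch checks out, so the argument is sound even though a couple of your intermediate eliminant polynomials depend on the elimination order and differ from what other orders produce.
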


\begin{proof}
The following \texttt{Mathematica} code yields the required results except for $n=4$, where $x_n = f_2(n)$.
\begin{verbatim}
Solve[x1 == 1 && x2 == x1 + x1 && x9 == x8 + x1
 && x5 x7 == x27 + x8 && x5 x13 == x64 + x1 
 && x8 x9 == x64 + x8 && x7 x13 == x64 + x27 
 && x2 x9 x7 == x125 + x1 && x8 x19 == x125 + x27
 && x27 x7 == x125 + x64 && x8 x43 == x343 + x1 
 && x27 x13 == x343 + x8 && x27 x19 == x512 + x1 
 && x13 x43 == x343 + x8 x27 && x9 x5 x19 == x512 + x343,
{x1, x2, x5, x7, x8, x9, x13, x19, x27, x43, x64, x125, x343, x512}]
\end{verbatim} 

Then, $f_2(4) = 4$ can be derived from $f_2(4)\,f_2(7) = f_2(1^3+3^3) = 1+f_2(27)$.
\end{proof}

\begin{lemma}\label{lem:f2(2^r)}
$f_2(p^r) = p^r$ for $p \in \{2,3,7\}$ and $r \geq 1$ except for $f_2(3)$. 
\end{lemma}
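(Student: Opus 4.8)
The plan is to prove, for each $p\in\{2,3,7\}$ separately, that $f_2(p^r)=p^r$ by strong induction on $r$, starting from $f_2(p^0)=1$ and omitting $r=1$ when $p=3$. The mechanism is uniform: to pin down $f_2(p^r)$ I would exhibit positive integers $a,b$ with
\[
a^3+b^3=p^r m,\qquad \gcd(p,m)=1,
\]
such that $f_2(a^3)$, $f_2(b^3)$, $f_2(m)$ have already been shown to equal $a^3$, $b^3$, $m$ (with $m\neq 0$); then multiplicativity and the functional equation give
\[
f_2(p^r)\,f_2(m)=f_2(p^r m)=f_2(a^3)+f_2(b^3)=a^3+b^3=p^r m=p^r f_2(m),
\]
forcing $f_2(p^r)=p^r$. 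So the real work is combinatorial: for every exponent $r$, produce such a representation whose two cubes and whose cofactor are all already under control.

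The representations come in families indexed by $r\bmod 3$. For $p=2$ all three are immediate and use only Lemma~\ref{lem:f2(n)=n_for_some_n}: $(2^{j+1})^3+(2^j)^3=3^2\cdot 2^{3j}$ gives $f_2(2^{3j+3})=8f_2(2^{3j})$; $(2^j)^3+(2^j)^3=2^{3j+1}$ gives $f_2(2^{3j+1})=2f_2(2^{3j})$; and $(3\cdot2^j)^3+(2^j)^3=2^2\cdot7\cdot2^{3j}$ gives $f_2(2^{3j+2})=4f_2(2^{3j})$, so starting from $f_2(1)=1$ all $f_2(2^r)$ are settled. For $p=3$ the classes $r\equiv 0,2\pmod 3$ are similar: $(3^{j+1})^3+(3^j)^3=2^2\cdot7\cdot3^{3j}$ yields $f_2(3^{3j+3})=27f_2(3^{3j})$, and $(2\cdot3^j)^3+(3^j)^3=3^{3j+2}$ yields $f_2(3^{3j+2})=9f_2(3^{3j})$. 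For $p=7$ the class $r\equiv 1$ is easy, $(3\cdot7^j)^3+(7^j)^3=2^2\cdot7^{3j+1}$ giving $f_2(7^{3j+1})=7f_2(7^{3j})$, and the class $r\equiv 2$ follows from $(7^j)^3+(80\cdot7^j)^3=3^5\cdot43\cdot7^{3j+2}$ once $f_2(2^{12})$, $f_2(3^5)=243$, $f_2(5^3)=125$, $f_2(43)=43$ are available (they are, after the $p=2$ and $p=3$ steps).

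The part that needs a genuine idea — and where I expect to spend the effort — is the leftover families $f_2(3^{3j+1})$ for $j\ge 1$ and $f_2(7^{3j})$ for $j\ge 1$, for which no sum of two small cubes has the right $p$-adic valuation. What I would do first is harvest the single auxiliary value
\[
f_2(19^3)=19^3,\qquad\text{from}\qquad 1^3+19^3=6860=2^2\cdot5\cdot7^3,
\]
so that $f_2(19^3)=f_2(4)f_2(5)f_2(7^3)-1=4\cdot5\cdot343-1=6859$, using nothing beyond Lemma~\ref{lem:f2(n)=n_for_some_n}. This value unlocks both stubborn families. Using the pair $(8,19)$: $8^3+19^3=7371=3^4\cdot7\cdot13$ forces $f_2(3^4)=81$, and replacing $(8,19)$ by $(3^{j-1}\cdot8,\,3^{j-1}\cdot19)$ and invoking $f_2(3^{3(j-1)})=3^{3(j-1)}$ from the $r\equiv 0$ family propagates this to $f_2(3^{3j+1})=3^{3j+1}$ for all $j\ge1$. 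Using instead the pair $(7^j\cdot19,\,7^j)$: $(7^j\cdot19)^3+(7^j)^3=2^2\cdot5\cdot7^{3j+3}$ gives $f_2(7^{3j+3})=343\,f_2(7^{3j})$, hence $f_2(7^{3j})=7^{3j}$.

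Carried out in the order Lemma~\ref{lem:f2(n)=n_for_some_n} $\to$ powers of $2$ $\to$ $f_2(19^3)$ $\to$ powers of $3$ $\to$ powers of $7$, the induction closes in every residue class. The main obstacle is therefore not a single hard inequality but a bookkeeping one: verifying that at the moment each identity is invoked, every cube $a^3$, $b^3$ and every cofactor $m$ occurring in it has genuinely already been computed and is nonzero, so that the cancellation above is legitimate.
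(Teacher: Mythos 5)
Your proof is correct, and while the overall strategy is the same as the paper's (strong induction on $r$ within the residue classes $r\bmod 3$, each step driven by a sum-of-two-cubes identity whose cofactor is already controlled), several of your specific identities differ and the differences are worth noting. For $2^r$ and for $3^{3j+2}$, $3^{3(j+1)}$, $7^{3j+1}$ you use essentially the paper's identities ($1+8=9$, $1+27=2^2\cdot 7$). For the two genuinely awkward families you diverge: the paper handles $3^{3m+1}$ via $5^3+22^3=3^4\cdot 7\cdot 19$, which forces it to first establish $f_2(11^3)$ through $5^3+11^3=2^4\cdot 7\cdot 13$, whereas you first extract $f_2(19^3)$ from $1^3+19^3=2^2\cdot 5\cdot 7^3$ and then use $8^3+19^3=3^4\cdot 7\cdot 13$; the paper handles $7^{3m+2}$ via $3^3+8^3=7^2\cdot 11$ (which quietly requires $f_2(11)=11$, a value it has not actually pinned down at that stage) and $7^{3(m+1)}$ via $2^3+7^3=3^3\cdot 13$, whereas you use $1^3+80^3=3^5\cdot 7^2\cdot 43$ and the scaled $1^3+19^3$ identity. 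Your route is therefore self-contained in a way the paper's is arguably not, at the cost of one larger identity. The only loose end is your parenthetical claim that $f_2(43)=43$ is ``available after the $p=2$ and $p=3$ steps'': it is not a consequence of those steps, but it does follow in one line from $1^3+7^3=344=2^3\cdot 43$ together with $f_2(8)=8$ and $f_2(7^3)=343$ from Lemma~\ref{lem:f2(n)=n_for_some_n} (indeed the paper's \texttt{Mathematica} code already solves for $x_{43}$); you should state that derivation explicitly before invoking the $7^{3j+2}$ identity.
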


\begin{proof}
We use induction on $r$. Assume that $2^r$ is fixed by $f_2$ for all $r \le 3m$. It is clear that $f_2(2^{3m+1}) = 2^{3m+1}$ by
\[
	f_2(2^{3m+1}) = f(2^{3m}+2^{3m}) = f(2^{3m}) + f(2^{3m}).
\]

We obtain $f_2(2^{3m+2}) = 2^{3m+2}$ by calculating $f_2( 7 \cdot 2^{3m+2} )$ in two ways:
\[
		f_2(7)\,f_2(2^{3m+2}) = 7 f_2(2^{3m+2}) \\
\]
and
\begin{align*}
	f_2( 7 \cdot 2^2 \cdot 2^{3m}) 
	&= f_2\bigl( (1^3+3^3)\,2^{3m} \bigr) 
		= f_2(2^{3m}) + f_2(3^3)\,f_2(2^{3m}) \\
	&= 2^{3m} + 3^3 \cdot 2^{3m}
		= 7 \cdot 2^{3m+2}.
\end{align*}

We can also obtain that $f_2(2^{3(m+1)}) = 2^{3(m+1)}$ by calculating $f_2( 9 \cdot 2^{3m})$ in two ways:
\[
		f_2(9)\,f_2(2^{3m}) = 9 \cdot 2^{3m}
\]
and
\[
	f_2\bigl( (1^3+2^3)\,2^{3m} \bigr) 
	= f_2(2^{3m}) + f_2(2^{3(m+1)})
	= 2^{3m} + f_2(2^{3(m+1)}).
\]

Assume that $3^r$ is fixed by $f_2$ for $2 \le r \le 3m$. Note that $f_2(11^3) = 11^3$ by
\[
	f_2(11^3+5^3) = f_2(11^3) + f_2(5^3) = f(2^4)\,f(7)\,f(13).
\]

Now, $f_2(3^{3m+1}) = 3^{3m+1}$ from
\begin{align*}
	f_2(7 \cdot 19 \cdot 3^{3m+1})
	&= f_2( 7 \cdot 19 \cdot 3^4 \cdot 3^{3(m-1)}) = f_2\bigl( (5^3+22^3)\,3^{3(m-1)} \bigr).
\end{align*}

It is easy to show that $f_2(3^{3m+2}) = 3^{3m+2}$ by
\begin{align*}
	f_2(3^{3m+2})
	&= f_2( 9 \cdot 3^{3m}) = f_2\bigl( (1^3+2^3)\,3^{3m} \bigr).
\end{align*}

We can deduce that $f_2(3^{3(m+1)}) = 3^{3(m+1)}$ by calculating
\begin{align*}
	f_2( 7 \cdot 3^{3(m+1)})
	&= f_2( 7 \cdot 3^3 \cdot 3^{3m}) = f_2\bigl( (4^3+5^3)\,3^{3m} \bigr).
\end{align*}

Now, we can show that $f_2$ fixes $7^{3m+1}$, $7^{3m+2}$, and $7^{3(m+1)}$ with $m \geq 0$ by calculating
\begin{align*}
	f_2(2^2 \cdot 7^{3m+1})
	&= f_2(2^2 \cdot 7 \cdot 7^{3m}) = f_2\bigl( (1^3+3^3)\,7^{3m} \bigr), \\
	f_2(11 \cdot 7^{3m+2})
	&= f_2(11 \cdot 7^2 \cdot 7^{3m}) = f_2\bigl( (3^3+8^3)\,7^{3m} \bigr), \\
	f_2(3^3 \cdot 13 \cdot 7^{3m})
	&= f_2\bigl( (2^3+7^3) 7^{3m} \bigr).
\end{align*}
This fact about $7^r$ will be used in Lemma~\ref{lem:f2_fixes_some_patterns}.
\end{proof}

Now, we use induction on prime $p \geq 5$ under the assumption
\[
	f_2(q^r) = q^r\text{ for all primes }q < p\text{ except for }f_2(3). \tag{$\ast$}\label{assumption}
\]

\begin{lemma}\label{lem:f2(p)}
Under the assumption (\ref{assumption}), $f_2(p) = p$ for all prime $p$.
\end{lemma}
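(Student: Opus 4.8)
The plan is to express $p$ (or a small multiple of $p$) as a sum of two positive cubes times an already-understood cofactor, so that multiplicativity and the inductive hypothesis $(\ast)$ force the value $f_2(p)=p$. Concretely, I would look for identities of the shape $c\cdot p^{\,e} = a^3 + b^3$ where the prime factorization of $c$ and of $a^3+b^3$ involves only primes $q<p$ (possibly together with a single controllable power of $3$, whose value we may set arbitrarily but which cancels because it appears to an exponent $\geq 2$), and where $f_2$ of the resulting composite is pinned down. The cleanest such identity comes from $(p-1)^3 + 1^3 = p\,(p^2-3p+3)$ and, more usefully, from noticing that $a^3+b^3 = (a+b)(a^2-ab+b^2)$; choosing $a+b$ or $a^2-ab+b^2$ to be a multiple of $p$ while keeping all other prime factors below $p$ gives the leverage we need.

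The key steps, in order, would be: (1) Starting from Lemma~\ref{lem:f2(n)=n_for_some_n} and Lemma~\ref{lem:f2(2^r)}, establish $f_2\bigl(a^3+b^3\bigr) = f_2(a^3)+f_2(b^3) = a^3+b^3$ whenever $a$ and $b$ are built only from primes $q<p$, since by $(\ast)$ every prime power dividing $a^3$ or $b^3$ is fixed (the exponents being multiples of $3$, any factor of $3$ is handled by Lemma~\ref{lem:f2(2^r)}). (2) Exhibit a concrete representation $a^3+b^3 = p^s \cdot M$ with $a,b$ as in step (1), with $M$ supported on primes $<p$, and with $p \nmid M$ — for instance one takes $b$ small and $a \equiv -b \pmod{p}$ so that $p \mid (a+b)$, then checks that $a^2-ab+b^2$ and the quotient are $p$-free and smooth. (3) Compute $f_2(p^s M) = f_2(p^s)\,f_2(M) = f_2(p^s)\cdot M$ by multiplicativity and step (1); since the left side equals $a^3+b^3 = p^s M$, conclude $f_2(p^s) = p^s$. (4) If $s>1$, bootstrap down to $f_2(p)=p$ by a second identity of the same kind producing $p^{s'}$ with $\gcd(s,s')=1$, or simply by choosing the first identity so that $s=1$ outright; a convenient choice is an $a,b$ with $p \parallel (a+b)$ and $p \nmid (a^2-ab+b^2)$.

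The main obstacle is purely Diophantine: for an \emph{arbitrary} prime $p\geq 5$ one must guarantee the existence of a representation $a^3+b^3$ that is simultaneously (i) divisible by $p$ to the first power only, (ii) otherwise smooth with all prime factors strictly below $p$, and (iii) with $a,b$ themselves smooth below $p$. Smoothness is not automatic — a generic value $(p-1)^3+1$ will usually have a large prime factor — so the proof must either use a flexible two-parameter family (varying $a$ and $b$ over residues $\equiv \pm b_0 \pmod p$ and over small smooth values until a smooth cofactor appears) together with an elementary bound showing such a choice always exists, or it must cleverly use the factorization $a^3+b^3=(a+b)(a^2-ab+b^2)$ to force the cofactor into an already-controlled set (e.g. making $a^2-ab+b^2$ equal to a fixed small number like $3$, $7$, or $p^2-p+1$, the latter of which the outline in Theorem~\ref{thm:f2} promises to handle next). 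I expect the actual proof to pick one or two explicit parametrized identities (analogous to the $(5^3+22^3)\,3^{3(m-1)}$ and $(4^3+5^3)\,3^{3m}$ tricks used in Lemma~\ref{lem:f2(2^r)}) that work uniformly, rather than an existential smoothness argument.
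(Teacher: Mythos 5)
Your outline has the right skeleton --- use $a^3+b^3=(a+b)(a^2-ab+b^2)$ with $p\mid(a+b)$, peel off the cofactor by multiplicativity --- but step (2) as you state it is a genuine gap, and you correctly identify it yourself: you require the cofactor $M$ to be \emph{smooth} (supported on primes $<p$), and no choice of $a,b$ guarantees that. The value $a^2-ab+b^2$ will generically contain large prime factors, and no elementary counting argument is offered (or available at this level) to produce a smooth instance for every $p$. A proof cannot end with ``the main obstacle is purely Diophantine'' plus a list of possible escapes; one of those escapes has to be executed.

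The missing idea, which the paper uses, is that $M$ need not be smooth --- it only needs to be a number at which $f_2$ has \emph{already} been evaluated, and this can be arranged by a preliminary application of the same hypothesis. Writing $p=2r+1$ and taking $a=r+1$, $b=r$, one gets $(r+1)^3+r^3=p\,(r^2+r+1)$ with $\gcd(p,r^2+r+1)=1$; the cofactor $r^2+r+1$ is not smooth, but it is first pinned down by the auxiliary sum
\[
(r+1)^3+1^3=(r+2)(r^2+r+1),
\]
whose left side equals $(r+1)^3+1$ by $(\ast)$ and whose factor $r+2<p$ \emph{is} smooth and coprime to $r^2+r+1$ (both coprimality facts reduce to $\gcd(\cdot,3)=1$, which holds because $3\nmid p$). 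Dividing out $r+2$ gives $f_2(r^2+r+1)=r^2+r+1$, and then the second identity gives $f_2(p)=p$. So your closest guess (``force the cofactor into an already-controlled set'') is the right one, but the controlled set is created on the spot by a companion identity rather than drawn from the small numbers $3$, $7$, or $p^2-p+1$; without exhibiting that companion identity the argument does not close.
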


\begin{proof}
Let $p = 2r+1 \geq 5$. Note that $f_2$ fixes $(r+1)^3$ and $r+2$, since their all prime divisors are less than $p$ and $3 \nmid (r+2)$. Now we can say that
\begin{align*}
	f_2\bigl( (r+1)^3+1^3 \bigr)
	&= f_2\bigl( (r+1)^3 \bigr) + f_2(1^3)
	= (r+1)^3 + 1 \\
	&= f_2\!\bigl( (r+2)(r^2+r+1) \bigr)
	= f_2(r+2)\,f_2(r^2+r+1) \\
	&= (r+2)\,f_2(r^2+r+1),
\end{align*}
since $\gcd(r+2, r^2+r+1) = \gcd(r+2, 3) = 1$. Hence, $f_2$ fixes $r^2+r+1$.

On the other hand, 
\begin{align*}
	f_2\bigl( (r+1)^3+r^3 \bigr)
	&= f_2\bigl( (r+1)^3+r^3 \bigr) + f_2(r^3) = (r+1)^3+r^3 \\
	&= f_2\bigl( (2r+1)(r^2+r+1) \bigr)
	= f_2(2r+1)\,f_2(r^2+r+1) \\
	&= f_2(p)\,(r^2+r+1)
\end{align*}
holds, since all prime divisors of $r^3$ is less than $p$ and $\gcd(2r+1, r^2+r+1) = 1$. Hence, $f_2(p) = p$.
\end{proof}

\begin{lemma}\label{lem:f2(p^3),f2(p^2-p+1)}
Under the assumption (\ref{assumption}), $f_2$ fixes $p^3$ and $p^2-p+1$ or $\frac{p^2-p+1}{3}$.
\end{lemma}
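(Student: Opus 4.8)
The plan is to exploit the two standard factorizations of sums of consecutive cubes, just as in the proof of Lemma~\ref{lem:f2(p)}, but now pushing to the next multiple of $p$. Write $p = 2r+1$. The key identities I would use are
\[
	(p+1)^3 - p^3 = 3p^2 + 3p + 1, \qquad
	\text{and}\qquad
	2p^3 = p^3 + p^3,
\]
together with the algebraic factorization
\[
	p^3 + 1 = (p+1)(p^2 - p + 1).
\]
From $f_2(p^3+1^3) = f_2(p^3) + 1$ and the multiplicativity applied to $(p+1)(p^2-p+1)$, I obtain a relation between $f_2(p^3)$ and $f_2(p^2-p+1)$ (or $f_2\!\bigl(\tfrac{p^2-p+1}{3}\bigr)$, depending on whether $3 \mid p+1$, since $\gcd(p+1, p^2-p+1)$ divides $3$). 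So the two quantities named in the statement are linked by a single equation, and it suffices to pin down one of them.

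Next I would obtain a second, independent relation. The natural choice is to compute $f_2\bigl((p+1)^3 + p^3\bigr)$ two ways: on one hand it equals $f_2\bigl((p+1)^3\bigr) + f_2(p^3) = (p+1)^3 + f_2(p^3)$, since $p+1 < 2p$ has all prime factors below $p$ (apart from the possible factor of $3$, which is harmless because $f_2$ fixes $3^r$ for $r\ge 2$, and if $3\,\|\,(p+1)$ one uses $9 \mid (p+1)^3$ so the ambiguous value $f_2(3)$ never enters); on the other hand,
\[
	(p+1)^3 + p^3 = (2p+1)\bigl(p^2 + p + 1 - p\cdot\text{(correction)}\bigr)
\]
— more precisely $(p+1)^3 + p^3 = (2p+1)\bigl((p+1)^2 - (p+1)p + p^2\bigr)$, and the second factor is $p^2 + p + 1$. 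Since $2p+1$ has all prime factors $< p$ except possibly a factor of $3$ handled as before, and $\gcd(2p+1, p^2+p+1) \mid 3$, multiplicativity gives $f_2(2p+1)\,f_2(p^2+p+1)$, which by the induction hypothesis and the already-established fact that $f_2$ fixes $r^2+r+1 = \tfrac{p^2+p+1}{... }$-type quantities (from Lemma~\ref{lem:f2(p)}) evaluates explicitly. This yields $f_2(p^3) = p^3$, and then the first relation forces $f_2$ to fix $p^2-p+1$ or $\tfrac{p^2-p+1}{3}$.

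The main obstacle I anticipate is the bookkeeping around the prime $3$: every factor of the form $p\pm 1$, $2p\pm 1$, $p^2\pm p+1$ may carry a factor of $3$, and since $f_2(3)$ is not determined, I must always arrange that $3$ appears either to a power $\ge 2$ (where $f_2$ is known) or not at all in the arguments to which I apply multiplicativity. Concretely, because exactly one of $p-1, p, p+1$ is divisible by $3$ and $p$ is not, I will need to split into the cases $p \equiv 1 \pmod 3$ and $p \equiv 2 \pmod 3$ and check that in each case the gcd computations $\gcd(p+1, p^2-p+1)$, $\gcd(2p+1, p^2+p+1)$ either equal $1$ or isolate a factor of $3$ to the first power that can be absorbed into the ``or $\tfrac{p^2-p+1}{3}$'' clause of the statement. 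This is exactly why the conclusion is stated disjunctively. Once the casework is set up, each branch is a short computation of the type already carried out in Lemma~\ref{lem:f2(p)}.
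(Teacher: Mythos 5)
Your overall strategy---factor sums of two cubes, set up linear relations in $f_2(p^3)$ and $f_2(p^2-p+1)$, and split on $p \bmod 3$---is the right one, and your first relation from $p^3+1^3=(p+1)(p^2-p+1)$ matches the paper. But your second relation has two genuine gaps. First, you assert that $2p+1$ has all prime factors $<p$ apart from a possible factor of $3$. That is false precisely when $p\equiv 2\pmod 3$: then $3\nmid 2p+1$, and $2p+1$ can itself be a prime larger than $p$ (e.g.\ $p=5,11,23,29$ give $11,23,47,59$), so $f_2(2p+1)$ is not covered by the induction hypothesis and the identity $(p+1)^3+p^3=(2p+1)(p^2+p+1)$ cannot be evaluated. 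The paper handles this case with $p^3+(p-1)^3=(2p-1)(p^2-p+1)$ instead: when $p\equiv 2\pmod 3$ one has $3\mid 2p-1$, so $\tfrac{2p-1}{3}<p$ is controlled, and---crucially---the cofactor is again $p^2-p+1$, so together with $p^3+1^3$ one gets two equations in the same two unknowns $f_2(p^3)$ and $f_2\bigl(\tfrac{p^2-p+1}{3}\bigr)$, which solve uniquely.

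Second, in the case $p\equiv 1\pmod 3$ (where $3\mid 2p+1$ does make $2p+1$ harmless), your evaluation of the right-hand side appeals to ``the already-established fact that $f_2$ fixes $r^2+r+1$-type quantities'' to claim $f_2(p^2+p+1)$ is known. This conflates two different numbers: Lemma~\ref{lem:f2(p)} fixes $r^2+r+1=\tfrac{p^2+3}{4}$ with $r=\tfrac{p-1}{2}$, not $p^2+p+1$, and $\tfrac{p^2+p+1}{3}>p$ can have prime factors exceeding $p$ (e.g.\ $p=7$ gives $19$). So your two equations involve three unknowns and the system is underdetermined. The paper closes this by a third identity, $(p+1)^3+1^3=(p+2)(p^2+p+1)$, whose left side is fully known because $(p+1)^3$ is fixed and $3\mid p+2$ forces $\tfrac{p+2}{3}<p$; this pins down $f_2\bigl(\tfrac{p^2+p+1}{3}\bigr)$ first, then $(p+1)^3+p^3$ gives $f_2(p^3)=p^3$, and finally $p^3+1^3$ gives $f_2(p^2-p+1)$. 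You need to supply either this extra identity or an equivalent one before your argument closes.
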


\begin{proof}
Let $p = 3^r s-1$ with $r \geq 1$ and $3 \nmid s$. Note that
\begin{align*}
	f_2( p^3 + 1^3)
	&= f_2(3^{r+1})\,f_2\!\left( \frac{p+1}{3^r} \right) f_2\!\left( \frac{p^2-p+1}{3} \right), \\
	f_2\bigl(p^3 + (p-1)^3 \bigr)
	&= f_2(3^2)\,f_2\!\left( \frac{2p-1}{3} \right) f_2\!\left( \frac{p^2-p+1}{3} \right),
\end{align*}
since $\gcd(p+1, p^2-p+1) = \gcd(2p-1, p^2-p+1) = 3$ and $\frac{p+1}{3^r}$, $\frac{p^2-p+1}{3}$, $\frac{2p-1}{3}$ are indivisible by $3$.

By induction hypothesis, we can say $(p-1)^3$, $\frac{p+1}{3^r}$ and $\frac{2p-1}{3}$ are fixed by $f_2$ since their prime divisors are less than $p$. Then, we can deduce that
\[
	f_2(p^3) = p^3 
	\qquad\text{ and }\qquad
	f_2\!\left(\frac{p^2-p+1}{3}\right) = \frac{p^2-p+1}{3}.
\]

If $p = 3^r s+1$ with $r \geq 1$ and $3 \nmid s$, then consider
\begin{align*}
	f_2\bigl( (p+1)^3 + 1^3 \bigr)
	&= f_2(3^2)\,f_2\!\left( \frac{2p+1}{3} \right) f_2\!\left( \frac{p^2+p+1}{3} \right), \\
	f_2\bigl( (p+1)^3 + p^3 \bigr)
	&= f_2(3^2)\,f_2\!\left( \frac{p+2}{3} \right) f_2\!\left( \frac{p^2+p+1}{3} \right),
\end{align*}
where $\gcd(2p+1, p^2+p+1) = \gcd(p+2, p^2+p+1) = 3$ and $\frac{2p+1}{3}$, $\frac{p^2+p+1}{3}$, $\frac{p+2}{3}$ are indivisible by $3$.

Since $p+1$ is even and $\frac{p+1}{2} < p$, we can say that $(p+1)^3$ is fixed by $f_2$. Then, 
\[
	f_2(p^3) = p^3 
	\qquad\text{ and }\qquad
	f_2\!\left( \frac{p^2+p+1}{3} \right) = \frac{p^2+p+1}{3}
\]
by induction hypothesis. This yields $f_2(p^2-p+1) = p^2-p+1$, since $\gcd(p+1, p^2-p+1) = 1$ and
\begin{align*}
	f_2( p^3 + 1^3)
	&= f_2( p+1 )\,f_2( p^2-p+1 ).
\end{align*}

We can conclude that $f_2$ fixes $\frac{p^2-p+1}{3}$ if $p \equiv -1 \pmod{3}$ and fixes $p^2-p+1$ otherwise.
\end{proof}

\begin{lemma}\label{lem:f2_fixes_some_patterns}
Under the assumption (\ref{assumption}), $f_2$ fixes $p^2+3$, $(p^2+1)^3$, and $p^4+3$.
\end{lemma}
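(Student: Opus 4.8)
The plan is to manufacture, for each target quantity, a genuine identity of the shape $a^3+b^3=(a+b)(a^2-ab+b^2)$ in which $a$ and $b$ are small polynomials in $p$, chosen so that $f_2(a^3)$ and $f_2(b^3)$ are already pinned down — this is automatic whenever $a$ or $b$ is $1$, is $p$, is $p^3$, or is any integer all of whose prime divisors are $<p$, because cubing raises the exponent of a possible prime $3$ to at least $3$ and so neutralises the single undetermined value $f_2(3)$ — while the product $(a+b)(a^2-ab+b^2)$ breaks into factors whose $f_2$-values are governed by the induction hypothesis $(\ref{assumption})$, by Lemmas~\ref{lem:f2(p)} and~\ref{lem:f2(p^3),f2(p^2-p+1)}, and (as the remark at the end of Lemma~\ref{lem:f2(2^r)} anticipates) by $f_2(7^r)=7^r$. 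Applying $f_2$ and using multiplicativity then yields a linear equation for the unknown, and the whole difficulty is keeping control of the common small-prime factors.

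For $p^2+3$ this is immediate from
\[ (p-1)^3+(p+1)^3 = 2p\,(p^2+3). \]
Every prime divisor of $p\pm1$ is $<p$, so the left side has $f_2$-value $(p-1)^3+(p+1)^3$. Writing $p^2+3=4a$ with $a$ odd and (since $3\nmid p$) coprime to $3$, the right side is $8pa$ with $8,p,a$ pairwise coprime, so $f_2$ splits and, using $f_2(p)=p$ from Lemma~\ref{lem:f2(p)}, we get $f_2(a)=a$ and hence $f_2(p^2+3)=p^2+3$. (The same conclusion comes from $(k+1)^3+1^3=(k+2)(k^2+k+1)$ with $p=2k+1$ and $p^2+3=4(k^2+k+1)$; here $3\nmid k+2$ because $3\nmid p$, and $k+2<p$.)

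For $(p^2+1)^3$ and $p^4+3$ the two values are coupled by
\[ (p^2+1)^3+2^3 = (p^2+3)\,(p^4+3). \]
Writing $p^2+3=4a$, $p^4+3=4b$ — here $a,b$ are odd, $3\nmid ab$, $\gcd(a,b)=1$, and one checks $16\mid(p^2+1)^3+8$ — the right side is $16ab$, and since $f_2(a)=a$ is now known this gives
\[ f_2\bigl((p^2+1)^3\bigr)+8 = 4(p^2+3)\,f_2(b). \]
The snag is that the transparent alternatives — e.g. $\bigl(\tfrac{p^2+1}{2}\bigr)^3+1^3=\tfrac{p^2+3}{2}\cdot\tfrac{p^4+3}{4}$ — all reduce to exactly this relation, so a structurally different cube-sum identity is needed to separate $f_2((p^2+1)^3)$ from $f_2(b)$. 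The remaining inputs are the factorisation $(p^2+1)^3+1^3=(p^2+2)(p^2+p+1)(p^2-p+1)$, which brings in $p^2\pm p+1$ (controlled by Lemma~\ref{lem:f2(p^3),f2(p^2-p+1)} and therefore forcing a split into $p\equiv 1$ and $p\equiv-1\pmod 3$) together with the nuisance factor $p^2+2$, and cube sums in which a power of $7$ divides $a^3+b^3$ — precisely why Lemma~\ref{lem:f2(2^r)} recorded $f_2(7^r)=7^r$ ahead of time. Feeding such a relation into the displayed equation pins down $f_2((p^2+1)^3)=(p^2+1)^3$, whence $f_2(b)=b$ and $f_2(p^4+3)=4b=p^4+3$.

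I expect the decoupling step to be the real obstacle: the obvious cube-sum identities involving $(p^2+1)^3$ all encode the single linear relation above, so one must dig out a genuinely independent identity and then push through the divisibility bookkeeping. The recurring difficulty behind that bookkeeping is that $f_2(3)$ is undetermined, which compels every quantity entering a multiplicativity split to be coprime to $3$ or else to carry $3$ to an exponent $\ge 2$ (typically by sitting inside a cube); tracking the exact powers of $2$ and $3$ on both sides of each identity, and handling the case split $p\equiv\pm1\pmod 3$ inherited from Lemma~\ref{lem:f2(p^3),f2(p^2-p+1)}, is where essentially all of the work lies.
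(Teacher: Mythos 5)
Your treatment of $p^2+3$ is correct and coincides with the paper's (your identity $(p-1)^3+(p+1)^3=2p\,(p^2+3)$ is the paper's $\bigl(\tfrac{p+1}{2}\bigr)^3+\bigl(\tfrac{p-1}{2}\bigr)^3=p\cdot\tfrac{p^2+3}{4}$ scaled by $8$), and your coupling relation $(p^2+1)^3+2^3=(p^2+3)(p^4+3)$ is exactly how the paper passes from $(p^2+1)^3$ to $p^4+3$. But the middle step --- actually determining $f_2\bigl((p^2+1)^3\bigr)$ --- is left unresolved. You correctly diagnose that a cube-sum identity independent of the coupling relation is needed, but the candidate you name does not work: in $(p^2+1)^3+1^3=(p^2+2)(p^2+p+1)(p^2-p+1)$ the factor $p^2+2$ is divisible by $3$ (since $p^2\equiv 1\pmod 3$), generically to the first power only, and its cofactor is a large integer whose prime factorization is not controlled by the induction hypothesis; no usable substitute is produced. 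Since this decoupling is the substance of the lemma, the proof is incomplete.

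The identity the paper uses is
\[
	(p^2+1)^3+(p-1)^3 \;=\; p\,(p+1)\,(p^2-p+1)\,(p^2+3),
\]
that is, $\bigl(\tfrac{p^2+1}{2}\bigr)^3+\bigl(\tfrac{p-1}{2}\bigr)^3=p\cdot\tfrac{p+1}{2}\cdot(p^2-p+1)\cdot\tfrac{p^2+3}{4}$. The left-hand side equals $f_2\bigl(\bigl(\tfrac{p^2+1}{2}\bigr)^3\bigr)+\bigl(\tfrac{p-1}{2}\bigr)^3$ because $\tfrac{p-1}{2}<p$, while every factor on the right is already pinned down: $p$ by Lemma~\ref{lem:f2(p)}, $\tfrac{p+1}{2}$ by (\ref{assumption}), $p^2-p+1$ (or $\tfrac{p^2-p+1}{3}$) by Lemma~\ref{lem:f2(p^3),f2(p^2-p+1)}, and $\tfrac{p^2+3}{4}$ by the first step. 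The only bookkeeping is the common factor $3$ between $\tfrac{p+1}{2}$ and $p^2-p+1$ when $p\equiv-1\pmod 3$, and the common factor $7$ between $p^2-p+1$ and $\tfrac{p^2+3}{4}$ when $p\equiv 5\pmod 7$ --- the latter being precisely why $f_2(7^r)=7^r$ was recorded ahead of time in Lemma~\ref{lem:f2(2^r)}. This is the ``genuinely independent identity'' your argument calls for and does not supply; with it in hand, your final step recovering $p^4+3$ from $\bigl(\tfrac{p^2+1}{2}\bigr)^3+1^3=2\cdot\tfrac{p^2+3}{4}\cdot\tfrac{p^4+3}{4}$ goes through as you describe.
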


\begin{proof}
Note that $\left(\frac{p+1}{2}\right)^3$ and $\left(\frac{p-1}{2}\right)^3$ are fixed by $f_2$ since $\frac{p+1}{2}$ and $\frac{p-1}{2}$ are less than $p$. Thus, we can conclude that $f_2(p^2+3) = p^2+3$ from
\[
	f_2\!\left( \left(\frac{p+1}{2}\right)^3 + \left(\frac{p-1}{2}\right)^3 \right) 
	= f_2(p)\,f_2\!\left( \frac{p^2+3}{4} \right)
	= p\,f_2\!\left( \frac{p^2+3}{4} \right).
\]

Next, consider
\[
	\left( \frac{p^2+1}{2} \right)^3 + \left( \frac{p-1}{2} \right)^3 
	= p \cdot \frac{p+1}{2} \cdot (p^2-p+1) \cdot \frac{p^2+3}{4},
\]
where
\begin{align*}
	&\gcd\!\left(p, \frac{p+1}{2} \right) = \gcd\left(p, p^2-p+1 \right) = \gcd\!\left(p, \frac{p^2+3}{4} \right) = 1,\\
	&\gcd\!\left(\frac{p+1}{2}, p^2-p+1 \right) =
	\begin{cases}
		1 & \text{if } p \equiv 1 \pmod{3},\\
		3 & \text{if } p \equiv -1 \pmod{3},
	\end{cases}\\
	&\gcd\!\left( \frac{p+1}{2}, \frac{p^2+3}{4} \right) = 1, \\
	&\gcd\!\left( p^2-p+1, \frac{p^2+3}{4} \right) =
	\begin{cases}
		1 & \text{if } p \not\equiv 5 \pmod{7}, \\
		7 & \text{if } p \equiv 5 \pmod{7}.
	\end{cases}
\end{align*}
We can verify that $(p^2+1)^3$ is fixed by $f_2$ for each case.

For example, assume $p = 3^r s - 1$ with $3 \nmid s$ and $p \equiv 5 \pmod{7}$. Then, since $3 \mid (p^2-p+1)$ but $3^2 \nmid (p^2-p+1)$,
\begin{align*}
	&f_2\!\left( \left( \frac{p^2+1}{2} \right)^3 + \left( \frac{p-1}{2} \right)^3 \right) 
		= f_2\!\left( \left( \frac{p^2+1}{2} \right)^3 \right)
			+ \left( \frac{p-1}{2} \right)^3 \\
	&= f_2(3^{r+1}) \, f_2(7^{a+b}) \, f_2(p) \, f_2\!\left( \frac{p+1}{2 \cdot 3^r} \right) \, f_2\!\left( \frac{p^2-p+1}{3 \cdot 7^a} \right) \, f_2\!\left( \frac{p^2+3}{4 \cdot 7^b} \right)
\end{align*}
with $a, b \geq 1$ and all terms on RHS are fixed by $f_2$. Thus, $f_2\bigl( (p^2+1)^3 \bigr) =  (p^2+1)^3$. 

Finally, note that
\begin{align*}
	f_2\!\left(
		\left( \frac{p^2+1}{2} \right)^3 + 1^3
	\right)
	&= 	
	\left(
		\frac{p^2+1}{2} 
	\right)^3
	+ 1^3.
\end{align*}

On the other hand,
\begin{align*}
	f_2\!\left(
		\left( \frac{p^2+1}{2} \right)^3 + 1^3
	\right)
	&= f_2(2)\,
	f_2\!\left( \frac{p^2+3}{4} \right)
	f_2\!\left( \frac{p^4+3}{4} \right)
\end{align*}
since $\gcd\left( \frac{p^2+3}{4}, \frac{p^4+3}{4} \right) = 1$.

Then, we can say that $f_2$ fixes $p^4+3$. 
\end{proof}

Now we are ready to prove Theorem \ref{thm:f2}. It is enough to show that $f_2(p^r) = p^r$ for all prime $p \geq 5$ and $r \in \mathbb{N}$. We use induction on $p$.

It is proved in Lemmas~\ref{lem:f2(p)} and \ref{lem:f2(p^3),f2(p^2-p+1)} that $f_2(p) = p$ and $f_2(p^3) = p^3$. Suppose that $f_2(p^{3m}) = p^{3m}$ for some $m \geq 1$. To show $f_2(p^{3m+1}) = p^{3m+1}$ we use the equality
\begin{align*}
	f_2\!\left( p^{3m}\left(\frac{p+1}{2}\right)^3 + p^{3m}\left(\frac{p-1}{2}\right)^3 \right) 
	= f_2(p^{3m+1})\,f_2\!\left( \frac{p^2+3}{4} \right),
\end{align*}
which was slightly modified from that used in Lemma~\ref{lem:f2_fixes_some_patterns}. Since $(p \pm 1)^3$ and $p^2+3$ are fixed, we obtain $f_2(p^{3m+1}) = p^{3m+1}$.

Similarly, $f_2(p^{3m+2}) = p^{3m+2}$ from
\begin{align*}
	f_2\!\left( p^{3m}\left(\frac{p^2-1}{2}\right)^3 + p^{3m}\left(\frac{p^2+1}{2}\right)^3 \right) 
	= f_2(p^{3m+2})\,f_2\!\left( \frac{p^4+3}{4} \right),
\end{align*}
where $\frac{p-1}{2} < \frac{p+1}{2} < p$ and $f_2$ fixes $(p^2+1)^3$ and $p^4+3$. This also works for $m=0$. Thus, $f_2(p^2) = p^2$.

It can be verified that $f_2(p^{3(m+1)}) = p^{3(m+1)}$ from
\begin{align*}
	f_2\bigl( p^{3m}(p^3+1) \bigr)
	&= f_2( p^{3(m+1)} + p^{3m} ) \\
	&= f_2( p^{3(m+1)}) + f_2(p^{3m}) 
	= f_2(p^{3(m+1)}) + p^{3m} \\
	&= f_2(p^{3m})\,f_2(p^3+1)
	= p^{3m} (p^3+1).
\end{align*}

Thus, proof is completed.

\section{$3$-additive uniqueness}

In the previous section, the set of positive cubes is not 2-AU. But, in this section, we show that the set of positive cubes is 3-AU. The key idea is algebraic representations of rational numbers as sums of three cubes of rational numbers.

Let $f_3$ be a multiplicative function satisfying the condition 
\[
	f_3(a^3+b^3+c^3) = f_3(a^3) + f_3(b^3) + f_3(c^3).
\]

\begin{lemma}\label{lem:f3(n)=n_for_some_n}
$f_3(n)=n$ for $n=1, 2, 2^3, 3, 3^2, 3^3, 5, 5^3, 7, 7^3, 11, 17, 23$.
\end{lemma}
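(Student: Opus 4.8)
The plan is to mimic the strategy of Lemma~\ref{lem:f2(n)=n_for_some_n}: collect a large family of numerical identities of the form ``$N$ is a sum of three positive cubes'' where $N$ factors into the small prime powers we care about, apply the additive hypothesis $f_3(a^3+b^3+c^3)=f_3(a^3)+f_3(b^3)+f_3(c^3)$ together with multiplicativity, and check that the resulting system of polynomial equations in the unknowns $x_n=f_3(n)$ has the unique solution $x_n=n$ on the list. Concretely I would first nail down $x_1=f_3(1)=1$ (since $f_3$ is multiplicative and not identically zero, and $f_3(1^3+1^3+1^3)=f_3(3)$ will tie things together), then $x_2=f_3(8)$, $x_3=f_3(27)$, and so on. The advantage over the square case is that three cubes is a much richer representation: every sufficiently flexible small integer has many essentially different representations as $a^3+b^3+c^3$ with $a,b,c\ge 1$, so we get an overdetermined system and can solve for each $x_n$ in turn.

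The key steps, in order, are: (1) write $f_3(3)=3f_3(1)$ from $1^3+1^3+1^3$, forcing $f_3(1)=1$ and $f_3(3)=3$; (2) use representations like $2^3=1^3+1^3+?$— actually $8$ is not a sum of three positive cubes, so instead get $f_3(2)$ via a composite: e.g. $f_3(2)f_3(9)=f_3(18)=f_3(1^3+1^3+?)$ — I would search for the smallest $N\equiv$ useful value with $N=2\cdot(\text{known})$ and $N$ a sum of three cubes; (3) bootstrap $f_3(2^3),f_3(3^2),f_3(3^3),f_3(5),f_3(7),f_3(11),f_3(17),f_3(23)$ and the cubes $f_3(5^3),f_3(7^3)$ by the same device, always arranging that every prime factor appearing on the right-hand side has already been pinned down or is one of the unknowns being solved for simultaneously; (4) feed the whole collection of equations to \texttt{Mathematica}'s \texttt{Solve} exactly as in Lemma~\ref{lem:f2(n)=n_for_some_n}, and read off the unique solution. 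Since the modular obstruction modulo $9$ that caused trouble for $f_2(3)$ is absent here — three cubes can represent $\pm3\pmod 9$, indeed $1^3+1^3+1^3=3$ itself — the value $f_3(3)$ is determined, unlike in Theorem~\ref{thm:f2}.

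The main obstacle is \textbf{finding the right identities}: unlike sums of two cubes, where the factorization $a^3+b^3=(a+b)(a^2-ab+b^2)$ hands you a clean multiplicative structure, sums of three cubes have no such algebraic factorization, so the identities must be located by direct search. One must ensure that for each target prime power $p^r$ on the list there is an $N$ divisible by $p^r$, expressible as $a_1^3+a_2^3+a_3^3$, whose remaining prime factors are all already known (or jointly solvable) — and in particular one must avoid circularity in the order of determination. For $11,17,23$ in particular one needs representations such as $11\mid N$, $17\mid N$, $23\mid N$ with $N$ small enough that cofactors stay within the already-settled set $\{1,2,3,5,7,\dots\}$; a short computer search over $a_1,a_2,a_3$ up to a modest bound (say a few hundred) will produce these, and then the linear-in-unknowns nature of each equation (each $x_N$ appears to the first power once the cube arguments are expanded) makes the final \texttt{Solve} routine. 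A secondary, purely bookkeeping point is to double-check each gcd/coprimality claim implicit in splitting $f_3(N)=f_3(d)f_3(N/d)$, exactly as the paper does throughout Section~2.
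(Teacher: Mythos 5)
Your proposal follows essentially the same route as the paper: the paper's proof observes $f_3(3)=f_3(1^3+1^3+1^3)=3$ and then feeds a list of sum-of-three-positive-cubes identities (e.g.\ $f_3(2)f_3(5)=f_3(1^3+1^3+2^3)$, $f_3(17)=f_3(1^3+2^3+2^3)$, $f_3(5)f_3(11)=f_3(1^3+3^3+3^3)$, and so on) into \texttt{Mathematica}'s \texttt{Solve}, exactly as you describe. The only difference is that you defer the explicit identity list to a computer search rather than exhibiting it; the method, the starting observations about $f_3(1)$ and $f_3(3)$, and the reliance on \texttt{Solve} are identical to the paper's.
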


\begin{proof}
It is clear that $f_3(3) = 3$.

If we set $y_n = f_3(n)$, then the following \texttt{Mathematica} code yields the required results.
\begin{verbatim}
Solve[y1==1 && y3==3
 && y2 y5 == y1 + y1 + y8 && y17 == y1 + y8 + y8 
 && y4 y9 = y8 + y8 + y27 && y43 = y8 + y27 + y27 
 && y5 y11 == y1 + y27 + y27 && y2 y31 == y8 + y27 + y27 
 && y2 y3 y11 == y1 + y1 + y64 && y4 y23 == y1 + y27 + y64 
 && y9 y11 == y8 + y27 + y64 && y3 y43 == y1 + y64 + y64 
 && y8 y17 == y8 + y64 + y64 && y9 y17 == y8 + y64 + y64 
 && y5 y31 == y27 + y64 + y64 && y8 y27 == y27 + y64 + y125 
 && y3 y5 y23 == y1 + y1 + y343 && y2 y27 y7 == y27 + y64 + y125,
{y1, y2, y3, y5, y7, y8, y9, y11, y17, y23, y27, y31, y43, y64,
 y125, y343}]
\end{verbatim}

\end{proof}

\begin{lemma}\label{lem:f3(2^r),f3(3^r)}
$f_3$ fixes $2^r$ and $3^r$ with $r \geq 1$.
\end{lemma}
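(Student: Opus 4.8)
# Proof Proposal for Lemma~\ref{lem:f3(2^r),f3(3^r)}

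The plan is to mimic the strategy of Lemma~\ref{lem:f2(2^r)}: induct on $r$ in blocks of three residues modulo $3$, using the fact that scaling a cube identity $u^3 + v^3 + w^3 = N$ by $t^3$ produces $(tu)^3 + (tv)^3 + (tw)^3 = t^3 N$, and that $f_3$ is multiplicative. Since Lemma~\ref{lem:f3(n)=n_for_some_n} already gives $f_3(2^3) = 8$ and $f_3(3^3) = 27$ (and $f_3(3)=3$, $f_3(9)=9$), the base cases $r \le 3$ are in hand for both primes. So assume $f_3$ fixes $2^r$ for all $r \le 3m$ (resp. $3^r$ for all $r \le 3m$) and push to $3m+1$, $3m+2$, $3m+3$.

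First I would handle $2^r$. For $2^{3m+1}$: write $2^{3m+1} = 2 \cdot 2^{3m}$ and find a representation of $2$ or of a small multiple of a power of $2$ as a sum of three cubes whose other prime factors are already controlled; the cleanest is likely to use an identity of the shape $f_3(2^{3m}(a^3+b^3+c^3))$ for a well-chosen triple, comparing it against $f_3$ of the factored form. Concretely, identities already appearing in the paper such as $2\cdot 5 = 1^3+1^3+2^3$ scale to $2^{3m}\cdot 10 = 2^{3m}(1^3+1^3+2^3)$, giving $f_3(2^{3m+1})\,f_3(5) = 2\cdot f_3(2^{3m}) + f_3(2^{3(m+1)})$ — but this couples $2^{3m+1}$ with $2^{3(m+1)}$, so I would instead seek a representation isolating a single new power. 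A safer route: for $2^{3m+2}$, use $2^2 \cdot 2^{3m}$ together with a relation expressing $4 \cdot (\text{controlled number})$ as a sum of three cubes; for $2^{3(m+1)}$, use $8 \cdot 2^{3m} = 2^{3m}(2^3)$ directly combined with, say, $2^{3m}(1^3+1^3+2^3) = 10\cdot 2^{3m}$ to solve the resulting two-by-two linear system in $f_3(2^{3m+1})$ and $f_3(2^{3(m+1)})$ once $f_3(2^{3m+2})$ is known. In practice I expect the author supplies three explicit scaled cube identities (one per residue class), each expressing $c\cdot 2^{3m+i}$ with $c$ built from primes $< $ the relevant bound or from powers of $2$ already fixed, and then reads off each value.

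Next, $3^r$ is handled the same way, and here there is extra leverage: $f_3(3) = 3$, $f_3(9) = 9$, $f_3(27) = 27$ are known outright, and for any $j$ we have the telescoping identity $3^{3m+3} = 3^3\cdot 3^{3m}$, while $3^{3m}\cdot(1^3+1^3+1^3) = 3^{3m+1}$ gives $f_3(3^{3m+1}) = 3 f_3(3^{3m}) = 3^{3m+1}$ immediately — that disposes of the $i=1$ case for free. For $3^{3m+2}$ and $3^{3(m+1)}$ I would again scale a fixed three-cube representation of a suitable number by $3^{3m}$; e.g. using $1^3+1^3+1^3=3$ twice at different levels, or combining with a representation like $3^{3m}(2^3+2^3+2^3) = 24\cdot 3^{3m}$ to involve $f_3(8)=8$, and solving the small linear system. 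The coprimality bookkeeping (checking that the cofactors are genuinely built from primes already under control, and tracking the exact power of $3$ dividing each cofactor, exactly as in Lemma~\ref{lem:f2(2^r)}) is routine.

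The main obstacle is purely Diophantine: for each residue class $i \in \{1,2,3\}$ and each of $p=2,3$, one must exhibit an explicit triple $(a,b,c)$ such that $a^3+b^3+c^3$ equals $p^i$ times an integer whose prime factorization involves only primes already shown fixed (together with controlled powers of $p$ itself), and moreover so that the induced linear relations among $f_3(p^{3m+1}), f_3(p^{3m+2}), f_3(p^{3m+3})$ have a unique solution. Unlike the squares case, sums of three cubes are extremely flexible (every integer not $\equiv \pm 4 \pmod 9$ is conjecturally — and for small cases provably — a sum of three cubes, and positive such representations are abundant), so finding these triples should not be hard; the only real care needed is ensuring the cofactors don't secretly introduce an uncontrolled large prime, which is why one chooses the triples to keep the cofactor small. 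Once the triples are fixed, each step is a one-line computation of $f_3$ of a product in two ways.
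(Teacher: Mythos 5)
Your strategy is exactly the paper's: induct on $r$ in blocks of three, scale a sum-of-three-cubes identity by a suitable power of $2$ or $3$, and exploit multiplicativity against the values already pinned down in Lemma~\ref{lem:f3(n)=n_for_some_n}. But the proposal stops where the actual work begins: each step of the paper's proof is one explicit Diophantine identity, and you exhibit essentially none of them. Moreover, the one concrete system you do set up is underdetermined as written. The relation $f_3\bigl(2^{3m}(1^3+1^3+2^3)\bigr)=f_3(5)\,f_3(2^{3m+1})$ gives the single equation $5f_3(2^{3m+1})=2\cdot 2^{3m}+f_3(2^{3(m+1)})$ in the two unknowns $f_3(2^{3m+1})$ and $f_3(2^{3(m+1)})$; knowing $f_3(2^{3m+2})$, as you suggest, does not decouple them, and ``$8\cdot 2^{3m}=2^{3m}(2^3)$ directly'' yields no equation at all, since the two factors are not coprime. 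A second independent relation is genuinely required.

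The paper supplies it by first pinning down $f_3(2^{3m+1})$ via $3^3+5^3+6^3=368=23\cdot 2^4$, i.e.\ $23\,f_3(2^{3m+1})=f_3\bigl((3^3+5^3+6^3)\,2^{3(m-1)}\bigr)=27\cdot 2^{3(m-1)}+125\cdot 2^{3(m-1)}+27\,f_3(2^{3m})$ (note $6^3\cdot 2^{3(m-1)}=3^3\cdot 2^{3m}$, so only exponents $\le 3m$ appear); after that, your $10\cdot 2^{3m}$ relation does deliver $f_3(2^{3(m+1)})$. The class $3m+2$ uses $2^3+3^3+5^3=160=5\cdot 2^5$. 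For $p=3$ the identities are $1^3+1^3+1^3=3$ (your observation, same as the paper's), $1^3+8^3+15^3=3888=2^4\cdot 3^5$, and $1^3+2^3+3^3=36=4\cdot 3^2$. Since sums of three \emph{positive} cubes are far sparser than you suggest (the ``not $\equiv\pm4\pmod 9$'' heuristic concerns integer cubes of arbitrary sign), producing identities whose cofactors involve only controlled primes is the substance of the lemma, not routine bookkeeping; without them the proposal is a plan rather than a proof.
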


\begin{proof}
We use induction. Assume that $f_2$ fixes $2^r$ for all $r < 3m$.

We can say that $f_3(2^{3m+1}) = 2^{3m+1}$ by
\begin{align*}
	f_3(23 \cdot 2^{3m+1})
	&= f_3(23)\,f_3(2^{3m+1}) = 23 f_3(2^{3m+1})\\
	&= f_3(23 \cdot 16 \cdot 2^{3(m-1)}) = f_3\bigl( (3^3+5^3+6^3)2^{3(m-1)} \bigr) \\
	&= f_3(3^3)\,f_3(2^{3(m-1)}) + f_3(5^3)\,f_3(2^{3(m-1)}) + f_3(3^3)\,f_3(2^{3m}).
\end{align*}

Next, $f_3(2^{3m+2}) = 2^{3m+2}$ by
\begin{align*}
	f_3(5 \cdot 2^{3m+2})
	&= f_3(5)\,f_3(2^{3m+2}) = 5 f_3(2^{3m+2})\\
	&= f_3(5 \cdot 2^5 \cdot 2^{3(m-1)}) = f_3\bigl( (2^3+3^3+5^3)2^{3(m-1)} \bigr) \\
	&= f_3(2^{3m}) + f_3(3^3)\,f_3(2^{3(m-1)}) + f_3(5^3)\,f_3(2^{3(m-1)}).
\end{align*}

Finally, $f_3(2^{3(m+1)}) = 2^{3(m+1)}$ from
\begin{align*}
	f_3(5 \cdot 2^{3m+1})
	&= f_3(5)\,f_3(2^{3m+1}) = 5 f_3(2^{3m+1})\\
	&= f_3(10 \cdot 2^{3m}) = f_3\bigl( (1^3+1^3+2^3)2^{3m} \bigr) \\
	&= f_3(2^{3m}) + f_3(2^{3m}) + f_3(2^{3(m+1)}).
\end{align*}

Now, assume that $f_3$ fixes upto $3^{3m}$. We can say that $f_3(3^{3m+1}) = 3^{3m+1}$ by
\begin{align*}
	f_3(3^{3m+1}) &= f_3(3^{3m}+3^{3m}+3^{3m}) = 3 f_3(3^{3m}).
\end{align*}

Let us consider $f_3(3^{3m+2})$. Note that $f_3$ fixes $2^6$ and $5^3$. Then, $f_3(3^{3m+2}) = 3^{3m+2}$ by
\begin{align*}
	f_3(2^4 \cdot 3^{3m+2}) 
	&= f_3(2^4)\,f_3(3^{3m+2}) = 2^4 \cdot f_3(3^{3m+2}) \\
	&= f_3(2^4 \cdot 3^5 \cdot 3^{3(m-1)}) = f_3\bigl( (1^3+8^3+15^3)\,3^{3(m-1)} \bigr) \\
	&= f_3(3^{3(m-1)}) + f_3(2^6)\,f_3(3^{3(m-1)}) + f_3(5^3)\,f_3(3^{3m}).
\end{align*}

Finally, $f_3(3^{3(m+1)}) = 3^{3(m+1)}$ by
\begin{align*}
	f_3(4 \cdot 3^{3m+2}) 
	&= f_3(4)\,f_3(3^{3m+2}) = 4 f_3(3^{3m+2}) \\
	&= f_3(4 \cdot 9 \cdot 3^{3m}) = f_3\bigl( (1^3+2^3+3^3)\,3^{3m} \bigr) \\
	&= f_3(3^{3m}) + f_3(2^3)\,f_3(3^{3m}) + f_3(3^{3(m+1)}).
\end{align*}%
\end{proof}

\begin{lemma}\label{lem:f3(n^3)=n^3}
$f_3$ fixes $n^3$.
\end{lemma}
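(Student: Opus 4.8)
The natural approach is to reduce the statement to prime powers and then feed classical ``sum of three rational cubes'' identities into the additive relation.

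\emph{Reduction and set-up.} Since $f_3$ is multiplicative and $n^3=\prod_i p_i^{3e_i}$ with the factors $p_i^{3e_i}$ pairwise coprime, it suffices to prove $f_3(p^{3m})=p^{3m}$ for every prime $p$ and every $m\ge 1$. The cases $p=2,3$ are Lemma~\ref{lem:f3(2^r),f3(3^r)}, so I would fix a prime $p\ge 5$ and argue by induction on $p$, assuming $f_3(q^{3j})=q^{3j}$ for all primes $q<p$ and all $j\ge 1$. Combined with Lemmas~\ref{lem:f3(n)=n_for_some_n} and \ref{lem:f3(2^r),f3(3^r)}, this means $f_3$ already fixes every power of $2$ and $3$, the integers $5,7,11,17,23$, and the cube of any integer whose prime factors are all $<p$; call a number \emph{admissible} if $f_3$ is known to fix it at this stage.

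\emph{Isolating $f_3(p^{3m})$.} The plan is to produce, for the fixed $p$, a single integer identity
\[
	a^3+b^3+c^3 = p^{3}\cdot t
\]
in which $a,b,c,t$ are positive, coprime to $p$, with $a,b,c$ such that $a^3,b^3,c^3$ are admissible and $t$ itself admissible. Granting this, multiply the identity by $p^{3m}$ so that it reads $(p^m a)^3+(p^m b)^3+(p^m c)^3 = p^{3(m+1)}\,t$, and apply $f_3$. The left side becomes $f_3\!\left((p^m a)^3\right)+f_3\!\left((p^m b)^3\right)+f_3\!\left((p^m c)^3\right)$, which, using multiplicativity and $p\nmid abc$, equals $f_3(p^{3m})\,(a^3+b^3+c^3)$; the right side is $f_3\!\left(p^{3(m+1)}\,t\right)=t\,f_3\!\left(p^{3(m+1)}\right)$. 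With $m=0$ this already gives $f_3(p^3)\,t = a^3+b^3+c^3 = p^3 t$, hence $f_3(p^3)=p^3$; and then induction on $m$ (using $f_3(p^{3m})=p^{3m}$ on the left) yields $f_3\!\left(p^{3(m+1)}\right)\,t = p^{3m}(a^3+b^3+c^3)=p^{3(m+1)}t$, so $f_3\!\left(p^{3(m+1)}\right)=p^{3(m+1)}$. Thus one good identity disposes of all exponents of $p$ at once, and the reduction finishes the lemma. (For instance, for $p=5$ one may take $7^3+7^3+4^3 = 5^3\cdot 6$, all of $7^3,4^3,6$ being admissible.)

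\emph{Producing the identities.} It remains, for each prime $p\ge 5$, to exhibit such $a,b,c,t$ — equivalently, to find an admissible representation of some small admissible $t$ as a sum of three rational cubes whose common denominator is a power of $p$ and whose numerators are admissible. Here one specializes classical parametric solutions of $X^3+Y^3+Z^3=W^3$ and the parametrizations underlying Ryley's theorem on sums of three rational cubes, choosing the parameter so that $p$ enters the sum $a^3+b^3+c^3$ to order exactly $3$ while the prime factorizations of $a,b,c,t$ stay admissible (supported on primes $<p$ together with $2,3$ and, where helpful, the special values of Lemma~\ref{lem:f3(n)=n_for_some_n}). Since no single polynomial identity can keep $p\nmid abc$ while forcing $p^3\mid a^3+b^3+c^3$ (by Fermat's result that $a(0)^3+b(0)^3+c(0)^3=0$ has no nonzero integer solution), this step necessarily splits into a handful of residue classes for $p$ to small moduli such as $3$ and $7$, each handled separately in the spirit of Lemma~\ref{lem:f2_fixes_some_patterns}. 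This case analysis — checking that the chosen identities really are admissible and coprime to $p$ in every residue class — is the main obstacle; everything else is bookkeeping with multiplicativity and the induction hypothesis.
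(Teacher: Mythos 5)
There is a genuine gap: the step you yourself call ``the main obstacle'' --- actually producing, for every prime $p\ge 5$, an identity $a^3+b^3+c^3=p^3t$ with $p\nmid abc$, with $a,b,c$ supported on primes $<p$, \emph{and} with the cofactor $t$ already known to be fixed --- is never carried out. The last requirement is the killer: at that stage of your induction the non-cube integers known to be fixed form a very thin set (powers of $2$ and $3$, the handful of values from Lemma~\ref{lem:f3(n)=n_for_some_n}, and products thereof), so it is far from clear that Ryley-type parametrizations can be specialized to land $t$ in that set for every residue class of $p$; you give one worked example ($p=5$) and a programme, not a proof. A secondary issue is that your reduction only targets $f_3(p^{3m})$ for $p\ge 5$, whereas whatever induction you run will in any case need $f_3(j^3)=j^3$ for various composite $j$, so organizing the induction prime-by-prime adds bookkeeping without gaining anything.

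The paper's proof avoids all of this with a single two-parameter polynomial identity,
\[
	m^3 + (5m)^3 + (5m\pm2)^3
	= (2m\mp1)^3 + (3m\pm2)^3 + (6m\pm1)^3,
\]
valid for every $n=6m\pm1$ with $m\ge 2$. Applying $f_3$ to both sides (each side is a sum of three positive cubes) and noting that every term other than $(6m\pm1)^3$ is the cube of a positive integer strictly smaller than $n$, one gets $f_3(n^3)=n^3$ by strong induction on $n$ over all $n$ coprime to $6$, with base cases $5^3$ and $7^3$ from Lemma~\ref{lem:f3(n)=n_for_some_n}; powers of $2$ and $3$ come from Lemma~\ref{lem:f3(2^r),f3(3^r)}, and multiplicativity assembles the general $n^3$. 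No coprimality-to-$p$ conditions, no residue-class analysis, and no search for admissible cofactors is needed. If you want to salvage your approach, the honest fix is to replace the prime-by-prime identity hunt with an identity of this uniform type.
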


\begin{proof}
It is enough to show that $f_3(p^{3r}) = p^{3r}$ for prime $p \geq 5$ and $r\geq 1$.
It was verified in Lemma~\ref{lem:f3(n)=n_for_some_n} that $f_3$ fixes $5^3$ and $7^3$.

Let $n = p^r = 6m \pm 1$ with $m \geq 2$. We can say $f_3(n^3) = n^3$ inductively by
\[
	m^3 + (5m)^3 + (5m\pm2)^3
	= (2m\mp1)^3 + (3m\pm2)^3 + (6m\pm1)^3.
\]
\end{proof}

Now, we prove Theorem \ref{thm:main} for $k=3$. By the above results, we may show only $f_3(p^r) = p^r$ for prime $p \geq 5$ and $r \in \mathbb{N}$.

In 1923 Richmond proved that every positive rational number, say $R$, can be written as a sum of positive cubes of rational numbers \cite{Richmond}. In especial, he showed 
\begin{align*}
	R \bigl( 6 \lambda^2 (\lambda^3 + 3 R)^2 \bigr)^3 
	&= \bigl( -(\lambda^3 + 3 R) (\lambda^6 - 30 \lambda^3 R + 9 R^2) \bigr)^3 \\
	&+ \bigl( \lambda^9 + 45 \lambda^6 R - 81 \lambda^3 R^2 + 27 R^3 \bigr)^3 \\
	&+ \bigl( 36 \lambda^3 (3R-\lambda^3) R \bigr)^3
\end{align*}
and each term is positive when
\[
	\frac{1-\sqrt{4/3}\sin 20^\circ}{1+\sqrt{4/3}\sin 20^\circ} \times 3R = .43376\dots \times 3R < \lambda^3 < 3R.
\]

We can check the representation by the \texttt{Mathematica} code:
\begin{verbatim}
Factor[( -(lambda^3 + 3 R) (lambda^6 - 30 lambda^3 R + 9 R^2) )^3
+ ( lambda^9 + 45 lambda^6 R - 81 lambda^3 R^2 + 27 R^3 )^3
+ ( 36 lambda^3 (3R-lambda^3) R )^3]
\end{verbatim}

Let $R$ be a prime $p > 23$. Then the length of interval 
\[
	(\sqrt[3]{.43376\dots \times 3p}, \sqrt[3]{3p})
\]
is greater than $1$. So the interval contains at least one integer $\lambda$. 

But, since there exist no integer $t$ such that
\[
	.43376\dots \times 3p = 1.30128\dots \times p < (pt)^3 < 3p,
\]
we can choose $\lambda$ relatively prime to $p$ in the interval. As for other primes $\le 23$ which are not proved to be fixed, we can choose a suitable $\lambda$ as follows:
\begin{align*}
	p=13&: \lambda = 3 \in (2.56707\dots, 3.39121\dots) \\
	p=19&: \lambda = 3 \in (2.91323\dots, 3.8485\dots)
\end{align*}
Then, $\gcd(p,6 \lambda^2 (\lambda^3 + 3 p)^2) = 1$ and
\[
	f_3\bigl(p \bigl( 6 \lambda^2 (\lambda^3 + 3 p)^2 \bigr)^3 \bigr)
	= f_3(p)\,\bigl( 6 \lambda^2 (\lambda^3 + 3 p)^2 \bigr)^3
\]
by Lemma~\ref{lem:f3(n^3)=n^3}. Thus, $f_3(p) = p$.

For example, if we let $p=29$ and choose $\lambda = 4$ from the interval $(3.3542, 4.43105)$, then
\[
	29 \cdot 2188896^3 = 6646265^3 + 1906183^3 + 1536768^3
\]
and thus $f_3(29) = 29$. 

If $R = p^2 \geq 25$, then the interval
\[
	(\sqrt[3]{.43376\dots \times 3p^2}, \sqrt[3]{3p^2})
\] 
contains an integer $\lambda$ relatively prime to $p^2$. So, $f_3(p^2) = p^2$ by the similar way.

If $R = p^3$, then $f_3(p^3) = p^3$ by Lemma~\ref{lem:f3(n^3)=n^3}. If $R = p^r$ with $p \geq 5$ and $r \geq 4$, then the length of the inteval
\[
	(\sqrt[3]{.43376\dots \times 3p^r}, \sqrt[3]{3p^r})
\] 
is greater than $2$. So we can find $\lambda$ relatively prime to $p^r$. Therefore, we can conclude that $f_3(p^r) = p^r$.

\section{$4$-additive uniqueness}

Let $f_4$ be a multiplicative function satisfying
\[
	f_4(a^3+b^3+c^3+d^4) = f_4(a^3) + f_4(b^3) + f_4(c^3) + f_4(d^3).
\]

Trivially, $f_4(1) = 1$ and $f_4(4) = 4$.

We show that $f_4(n) = n$ through the some steps.
\begin{enumerate}
\item $f_4(2^r) = 2^r$ for all positive integers $r$
\item $f_4(n^3) = n^3$ for all positive integers $n$
\item $f_4(p^r) = p^r$ for all primes $p$
\end{enumerate}

\begin{lemma}\label{lem:f4(2^r)}
$f_4(2^r) = 2^r$.
\end{lemma}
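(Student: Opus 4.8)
The plan is to prove $f_4(2^r)=2^r$ by induction on $r$, mimicking the pattern already used for $f_3(2^r)$ in Lemma~\ref{lem:f3(2^r),f3(3^r)}. Since $2^3$ is one of the values that should be pinned down first (analogously to the $y$-solve in the $k=3$ case), I would begin by recording the small-value base cases: a short finite computation (e.g. a \texttt{Mathematica} \texttt{Solve} over the relevant $x_n=f_4(n)$) using identities such as $f_4(3\cdot2^?)=\cdots$ coming from decompositions $a^3+b^3+c^3+d^3$ of small integers, to establish $f_4(2)=2$, $f_4(2^2)=2^2$, $f_4(2^3)=2^3$, and whatever auxiliary prime powers (like $3^3$, $5^3$, $7^3$) are needed as coefficients. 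I expect these are available either from the same kind of linear system or can be lifted from the earlier sections.

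Next I would run the induction: assume $f_4$ fixes $2^r$ for all $r<3m$ (and for $r=3m$ where already available), and climb through the three residue classes $3m+1$, $3m+2$, $3(m+1)$ modulo $3$. For each target exponent $e$ I want an identity of the shape
\[
	c \cdot 2^e = \left(\alpha^3+\beta^3+\gamma^3+\delta^3\right)\cdot 2^{3j}
\]
where $c$ is an integer coprime to $2$ whose value is already fixed by $f_4$ (so $f_4(c\cdot 2^e)=c\,f_4(2^e)$), $j<m$ or $j=m$, and each of $\alpha,\beta,\gamma,\delta$ is a power of $2$ times a previously fixed number, so that the right-hand side evaluates under $f_4$ to a known quantity plus possibly $f_4(2^e)$ itself. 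Solving the resulting linear equation for $f_4(2^e)$ gives $f_4(2^e)=2^e$. The simplest such moves are immediate: $f_4(2^{3m+1})=f_4(2^{3m}+2^{3m}+2^{3m-3}+2^{3m-3}\cdot(\text{correction}))$ — more cleanly, $2^{3m+2}=2^{3m}+2^{3m}+2^{3m}+2^{3m}$ handles a doubling-type step, and decompositions like $10\cdot 2^{3m}=(1^3+1^3+2^3)2^{3m}$ reused with an extra cube, or $1^3+1^3+1^3+5^3=128=2^7$, $1^3+1^3+3^3+3^3=56=2^3\cdot7$, etc., supply the coprime multipliers $7$, $5$, and so on.

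The main obstacle is purely the bookkeeping of finding, for each of the three residue classes, a four-cube identity whose terms and whose coprime cofactor are all already known to be fixed — in particular making sure the cofactor $c$ is coprime to $2$ and was established in the base-case step, and that no term forces a $2$-power exponent larger than the one being solved for (which would break the induction). Once the three template identities are in hand the argument is mechanical. I would present it exactly in the style of Lemma~\ref{lem:f3(2^r),f3(3^r)}: state "we use induction; assume $f_4$ fixes $2^r$ for all $r<3m$", then give three short \texttt{align*} blocks, one per class, each evaluating some $f_4(c\cdot 2^e)$ two ways and concluding $f_4(2^e)=2^e$.
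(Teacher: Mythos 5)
Your proposal matches the paper's proof in structure and substance: a finite \texttt{Solve} to pin down $f_4(n)=n$ for $n=1,2,4,8,3,9,27,5$, then induction through the three residue classes of the exponent mod $3$, using the trivial identity $2^{3m}+2^{3m}+2^{3m}+2^{3m}=2^{3m+2}$ for one class and four-cube decompositions with an odd, already-fixed cofactor for the other two (the paper's choices are $30\cdot 2^{3m}=(1^3+1^3+1^3+3^3)2^{3m}$ with cofactor $15$, and $9\cdot 2^{3m+1}=(1^3+1^3+2^3+2^3)2^{3m}$ with cofactor $9$). The "bookkeeping" you flag is exactly what the paper supplies, and your candidate identities (e.g.\ $1^3+1^3+3^3+3^3=7\cdot 2^3$) would serve equally well.
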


\begin{proof}
If we set $z_n = f_4(n)$, then the following \textsf{Mathematica} code yields
\[
	f_4(n) = n\text{ for }n = 1, 2, 2^2, 2^3, 3, 3^2, 3^3, \text{ and }5.
\]

\begin{verbatim}
Solve[z1 == 1 && z4 == 4 && z11 == z8 + z1 + z1 + z1
 && z2 z9 == z8 + z8 + z1 + z1 && z25 == z8 + z8 + z8 + z1
 && z2 z3 z5 == z27 + z1 + z1 + z1 && z37 == z27 + z8 + z1 + z1
 && z4 z11 == z27 + z8 + z8 + z1 && z8 z7 == z27 + z27 + z1 + z1
 && z9 z7 == z27 + z27 + z8 + z1 && z2 z5 z7 == z27 + z27 + z8 + z8
 && z2 z37 == z64 + z8 + z1 + z1 && z8 z11 == z64 + z8 + z8 + z8
 && z4 z25 == z64 + z27 + z8 + z1, 
 {z1, z2, z3, z4, z5, z7, z8, z9, z11, z25, z27, z37, z64}]  
\end{verbatim}

We use induction. We obtain $f_4(2^{3m+1}) = 2^{3m+1}$ is from
\begin{align*}
	f_4(3 \cdot 5 \cdot 2^{3m+1})
	&= 15 \cdot f_4(2^{3m+1}) \\ 
	&= f_4(30 \cdot 2^{3m}) = f\bigl( (1^3+1^3+1^3+3^3)\,2^{3m} \bigr) \\
	&= 30 \cdot f_4(2^{3m})
\end{align*}

Trivially, $f_4(2^{3m+2}) = 2^{3m+2}$ by
\begin{align*}
	f_4(2^{3m+2}) 
	&= f_4(2^{3m} + 2^{3m} + 2^{3m} + 2^{3m} ).
\end{align*}

Next, 
$f_4(2^{3(m+1)}) = 2^{3(m+1)}$ from
\begin{align*}
    f_4(9 \cdot 2^{3m+1}) 
    &= f_4(9)\,f_4(2^{3m+1}) = 9 \cdot 2^{3m+1} \\
    &= f_4(9 \cdot 2 \cdot 2^{3m}) = f\bigl( (1^3 + 1^3 + 2^3 + 2^3)\,2^{3m} \bigr) \\
    &= f_4(2^{3m}) + f_4(2^{3m}) + f_4(2^{3(m+1)}) + f_4(2^{3(m+1)}) \\
    &= 2 \cdot 2^{3m} + 2 \cdot f_4(2^{3(m+1)}).
\end{align*}

\end{proof}

\begin{lemma}\label{lem:f4(n^3)}
$f_4(n^3) = n^3$.
\end{lemma}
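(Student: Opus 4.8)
The plan is to reduce the claim to showing $f_4(p^{3r}) = p^{3r}$ for every prime $p \geq 5$ and every $r \geq 1$, since $f_4(2^{3r}) = 2^{3r}$ follows from Lemma~\ref{lem:f4(2^r)}, $f_4(3^{3r}) = 3^{3r}$ will be handled by the induction of Lemma~\ref{lem:f4(2^r)} together with the values already computed there, and multiplicativity then gives $f_4(n^3) = n^3$ for all $n$. The engine for the prime-power case is Richmond's rational identity, used in exactly the same way as in Section~3: for $R = p^r$ one writes
\[
	R \bigl( 6 \lambda^2 (\lambda^3 + 3 R)^2 \bigr)^3
	= \bigl( -(\lambda^3 + 3 R)(\lambda^6 - 30 \lambda^3 R + 9 R^2) \bigr)^3
	+ \bigl( \lambda^9 + 45 \lambda^6 R - 81 \lambda^3 R^2 + 27 R^3 \bigr)^3
	+ \bigl( 36 \lambda^3 (3R - \lambda^3) R \bigr)^3,
\]
but now with a fourth cube appended on the left. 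The simplest device is to add $(p^s)^3$ to both sides for a suitable auxiliary exponent, turning the three-cube identity into a four-cube identity, so that $f_4$ applied termwise gives
\[
	f_4(p^r) \cdot \bigl( 6 \lambda^2 (\lambda^3 + 3 p^r)^2 \bigr)^3 + f_4(p^{3s})
	= f_4\!\left(\text{cube}_1\right) + f_4\!\left(\text{cube}_2\right) + f_4\!\left(\text{cube}_3\right) + f_4(p^{3s}),
\]
and, provided $p \nmid \lambda$ so that $p \nmid 6\lambda^2(\lambda^3+3p^r)^2$ and the three right-hand cubes are coprime to $p$, all terms except $f_4(p^r)$ are already known to be fixed; this forces $f_4(p^r) = p^r$.

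First I would record the small cases absorbed by the \texttt{Mathematica} computation and the induction of Lemma~\ref{lem:f4(2^r)}, then note that the Richmond interval $(\sqrt[3]{.43376\ldots \times 3R},\ \sqrt[3]{3R})$ has length exceeding $1$ once $R$ is large enough (in particular for every prime power $p^r$ with $p \geq 5$ except possibly a few very small ones), so it contains an integer $\lambda$; as in Section~3 one checks there is no integer $t$ with $(pt)^3$ in the interval, hence $\lambda$ can be taken coprime to $p$. Second, I would treat the finitely many exceptional small prime powers (analogues of $p = 13, 19$ in Section~3, together with $5^2$, $7^2$, and whatever else is not yet covered) by exhibiting an explicit admissible $\lambda$ in each case. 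Third, I would assemble the four-cube identity, apply $f_4$, cancel the common term $f_4(p^{3s})$, invoke Lemma~\ref{lem:f4(2^r)} and the already-established values to see that every summand but one is fixed, and conclude $f_4(p^r) = p^r$. Finally, multiplicativity upgrades this to $f_4(n^3) = n^3$ for all $n$.

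The main obstacle is bookkeeping rather than conceptual: one must make sure the coprimality with $p$ genuinely holds for all three Richmond cubes simultaneously \emph{and} that the appended fourth cube can be chosen (a power of a prime already known to be fixed, or simply $1^3$) without colliding with the prime $p$ under consideration. Choosing the fourth cube to be $1^3$ is cleanest and avoids auxiliary exponents entirely, at the cost of checking that $R\bigl(6\lambda^2(\lambda^3+3R)^2\bigr)^3 + 1$ factors into parts coprime to $p$ — which it does, since the added $1$ does not disturb divisibility by $p$. A secondary point is verifying the interval-length bound separately for $r = 1$ and $r = 2$ (where the interval is shortest) versus $r \geq 3$, exactly paralleling the case analysis in Section~3; for $r \geq 4$ the interval has length greater than $2$, giving ample room, and $r = 3$ is already covered since $p^3$ is a perfect cube.
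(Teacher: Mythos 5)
There is a genuine gap: your use of Richmond's identity here is circular. Richmond writes $R\bigl(6\lambda^2(\lambda^3+3R)^2\bigr)^3$ as $A^3+B^3+D^3$ where $A,B,D$ are values of degree-nine polynomials in $\lambda$ and $R$; these are integers far larger than $p$ whose prime factorizations are completely uncontrolled. To evaluate $f_4(A^3)+f_4(B^3)+f_4(D^3)$ on the right-hand side you must already know that $f_4$ fixes cubes of arbitrary integers --- which is exactly the statement of the lemma you are proving. No induction on $n$ or on $p$ can rescue this, because the cube roots of the Richmond summands grow like $R^3$, i.e.\ strictly faster than the quantity you are inducting on. This is why, in Section~3, the paper proves Lemma~\ref{lem:f3(n^3)=n^3} \emph{first}, by the elementary identity $m^3+(5m)^3+(5m\pm2)^3=(2m\mp1)^3+(3m\pm2)^3+(6m\pm1)^3$, and only then brings in Richmond --- and only for the non-cube prime powers $f_3(p^r)$, which is not what the present lemma asserts. (For $k=4$ the paper does not use Richmond at all; the non-cube prime powers are handled afterwards by the density of sums of four positive cubes.)

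The paper's actual proof of this lemma is short and entirely self-contained: for odd $n=2s+1$ with $s\geq 2$ one has the polynomial identity
\[
	(2s+1)^3+(2s+1)^3+(s+2)^3+(s-1)^3=(s+1)^3+s^3+\bigl(2(s+1)\bigr)^3+(2s)^3,
\]
a four-cubes-equal-four-cubes relation in which every term other than the two copies of $n^3$ is the cube of an integer whose odd part is strictly less than $n$. Applying $f_4$ to both sides and using multiplicativity, $f_4(2^r)=2^r$, $f_4(3^3)=3^3$, and induction on odd $n$ yields $2f_4(n^3)=2n^3$; even $n$ then follow by factoring out powers of $2$. If you want to repair your write-up, replace the Richmond step by an identity of this kind (or find your own four-cube identity expressing a multiple of $n^3$ in terms of cubes of smaller integers); the interval and coprimality analysis you describe is machinery for a different lemma.
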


\begin{proof}
We have $f_4(2^r) = 2^r$ and $f_4(3^3) = 3^3$. For odd integer $n=2s+1$ with $s \geq 2$ we use induction and the following equality:
\begin{align*}
	&(2s + 1)^3 + (2s + 1)^3 + (s + 2)^3 + (s - 1)^3\\
	&= (s + 1)^3 + s^3 + \bigl(2(s + 1)\bigr)^3 + (2s)^3.
\end{align*}
\end{proof}

By the above Lemma, if $n$ can be written as a sum of four positive cubes, then $f_4(n) = n$.

Now, we are ready to prove $f_4(n)=n$ for arbitrary positive integer $n$. We need a deep result about sums of four positive cubes. It is a well-known conjecture that all sufficiently large positive natural numbers may be written as the sum of four positive cubes. But, it is known that
\[
	\frac{\#\mathcal{E}_4(N)}{N} \to 0 \quad\text{ as }N \to \infty,
\]
where $\mathcal{E}_4(N)$ is the set of numbers $\le N$ whose elements cannot be represented as sums of four positive cubes \cite{Davenport, Vaughan1986, Brudern1989, Brudern1991}.

Consider $f_4(p^r)$ for $p$ prime. Denote the set of positive integers $\le N$ by $[N]$ and the subset of $[N]$ whose elements are not divisible by $p$ by $[N]_p$. Then, every element of $[N]_p \setminus \mathcal{E}_4(N)$ can be written as a sum of four positive cubes.

Note that 
\begin{align*}
	\frac{\#\bigl( p^r ([N]_p \cap \Sigma^3_4 ) \bigr)}{\#[p^r N]}
	&= \frac{\#\bigl( p^r ([N]_p \setminus \mathcal{E}_4(N))\bigr)}{\#[p^r N]}\\
	&= \frac{\#\bigl( [N]_p \setminus \mathcal{E}_4(N)\bigr)}{p^r N}
	\to \frac{1}{p^r}\bigl( 1 - \frac{1}{p} \bigr) > 0
\end{align*}
as $N \to \infty$, where $\Sigma^3_4$ denotes the set of sums of $4$ positive cubes. Thus, there exists a positive integer $M$ such that $p \nmid M$, $M \in \Sigma^3_4$ and $p^r M \in \Sigma^3_4$.

Then, $f_4(p^r) = p^r$ since
\begin{align*}
	p^r M = f_4(p^r M) = f_4(p^r)\,f_4(M) = f_4(p^r) M.
\end{align*}
We are done.

\section{$k$-additive uniqueness}

Let $k \geq 5$. Suppose a multiplicative function $f_k$ satisfies
\[
	f_k(a_1^3 + a_2^3 + \dots + a_k^3) = f_k(a_1^3) + f_k(a_2^3) + \dots + f_k(a_k^3)
\]
for all positive integers $a_1, a_2, \dots, a_k$. 

\begin{lemma}\label{lem:fk(n^3)}
Either $f_k(n^3) = n^3$ for all $n \in \mathbb{N}$.
\end{lemma}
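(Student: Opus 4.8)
The plan is to mimic the inductive bootstrapping used for $f_3$ and $f_4$, but to exploit the extra summands available when $k\ge 5$. First I would establish a handful of small base cases, namely that $f_k$ fixes $1$, $2$, $2^2$, $2^3$, $3$, $3^2$, $3^3$, and perhaps $5$ or $7$, by writing down a short list of additive identities among small cubes (for instance $f_k(k\cdot 1^3)=k$, and identities like $a^3+b^3+\cdots = a'^3+b'^3+\cdots$ padded out with enough copies of $1^3$ so that both sides have exactly $k$ terms) and solving the resulting linear system — exactly the \texttt{Mathematica}-style computation done in Lemmas~\ref{lem:f3(n)=n_for_some_n} and \ref{lem:f4(2^r)}. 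Because $k\ge 5$ gives us many free slots to fill with $1^3$, producing such identities is easy, so I expect the base cases to be routine.

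Next I would show $f_k(2^r)=2^r$ and $f_k(3^r)=3^r$ for all $r$ by induction on $r$ in residue classes modulo $3$, again copying the structure of Lemma~\ref{lem:f3(2^r),f3(3^r)}: to handle $2^{3m+1}$, $2^{3m+2}$, $2^{3(m+1)}$ I would find, for a well-chosen auxiliary integer $c$ with all prime factors already known to be fixed, two evaluations of $f_k(c\cdot 2^{3m})$ — one via multiplicativity and one via a cube identity of the form $c\cdot 2^{e} = (b_1^3+\cdots+b_k^3)$, i.e. $c\cdot 2^{e-3m}$ is a sum of $k$ positive cubes with controlled $2$-adic behaviour — and solve for the unknown power of $2$. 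The key point is that with $k\ge 5$ summands the representability of small multiples of powers of $2$ (and $3$) as sums of $k$ positive cubes is very flexible, so the needed identities exist; the argument for $3^r$ is analogous.

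With $2^r$ and $3^r$ fixed, I would then prove $f_k(n^3)=n^3$ for all $n$ by induction on $n$, using an algebraic parametric identity expressing, for $n=2s+1$ odd, the cube $(2s+1)^3$ (taken with suitable multiplicity, padded by fixed cubes on both sides) as a sum of $k$ positive cubes of integers smaller than $n$ together with cubes already controlled by the induction hypothesis. This is the direct generalization of the identities in Lemmas~\ref{lem:f3(n^3)=n^3} and \ref{lem:f4(n^3)}; for $k\ge 5$ one has room to absorb extra terms, so one can start from, say, the $k=4$ identity $(s+1)^3+s^3+(2(s+1))^3+(2s)^3=2(2s+1)^3+(s+2)^3+(s-1)^3$ and add $k-4$ copies of $1^3$ to both sides, or a similar device, yielding $f_k(n^3)=n^3$ once the smaller cubes and $2$- and $3$-power cubes are known to be fixed.

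The main obstacle I anticipate is the inductive step for the smallest odd cubes in the $f_k(n^3)=n^3$ argument and, relatedly, making sure the base cases cover every $n^3$ that the parametric identity cannot reach (the analogue of needing $5^3$ and $7^3$ fixed before the recursion in Lemma~\ref{lem:f3(n^3)=n^3} can start); one must check that the chosen identity's smaller summands are genuinely positive and strictly less than $n$, and that no summand is an uncontrolled prime power. I would resolve this by verifying the first few odd $n$ (e.g. $n=5,7,9,11$) directly from explicit sums of $k$ positive cubes built from the already-fixed values, and then letting the parametric family take over for all larger $n$. Once $f_k(n^3)=n^3$ holds for all $n$, the statement of the lemma is proved; the passage from here to $f_k(p^r)=p^r$ for all prime powers (the next step of the section) would, as in \S4, use the density result that almost all integers are sums of $k$ positive cubes, but that is beyond the present lemma.
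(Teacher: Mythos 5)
There is a genuine gap at your first two steps, and it sits exactly where the paper's proof has to depart from the template of \S3 and \S4. The only identities you can produce \emph{uniformly in $k$} by ``padding with copies of $1^3$'' are of the form $\sum_i a_i^3 = \sum_j b_j^3$ with $k$ cubes on each side; applying $f_k$ to such an identity yields only the linear relation $\sum_i t_{a_i} = \sum_j t_{b_j}$ among the quantities $t_n = f_k(n^3)$, and every such relation is satisfied both by $t_n = n^3$ and by the degenerate assignment $t_n \equiv 1$. To break the tie you need a multiplicative anchor, i.e.\ an evaluation $f_k(a_1^3+\cdots+a_k^3)=\prod f_k(q^e)$ over the prime-power factors of the total; but the total of any sum of $k$ positive cubes is at least $k$ and its factorization changes with $k$, so no fixed list of such equations (like the \texttt{Mathematica} systems of Lemmas~\ref{lem:f3(n)=n_for_some_n} and~\ref{lem:f4(2^r)}) can serve all $k \ge 5$ at once. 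Your one uniform anchor, $f_k(k\cdot 1^3)=k$, constrains $f_k$ only at the prime powers dividing $k$ and in general says nothing about $t_2, t_3, t_5, \dots$. For the same reason your second step, fixing $f_k(2^r)$ and $f_k(3^r)$ for \emph{all} $r$ (which requires evaluating $f_k$ at non-cubes such as $c\cdot 2^{3m+1}$ via multiplicativity), cannot be carried out uniformly in $k$ --- and it is not actually needed for this lemma.

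The paper resolves this by embracing the ambiguity rather than trying to eliminate it at the outset: its system of padded ``$k$ cubes $=$ $k$ cubes'' identities is shown to have exactly two solutions, $t_n\equiv 1$ and $t_n=n^3$, for $n\le 10$; the upward induction you describe in your third step then propagates whichever branch holds to all $n$; and the degenerate branch is excluded afterwards by a separate multiplicative argument, namely using $6^3=3^3+4^3+5^3$ and $7^3=1^3+1^3+5^3+6^3$ to manufacture a cube $N^3$ that is a sum of exactly $k$ positive cubes, whence $1=f_k(N^3)=k$, a contradiction. Your outline contains no analogue of this exclusion step, and without it nothing in your scheme distinguishes the identity from the spurious constant solution on cubes. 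Your third step (a parametric $4$-versus-$4$ identity padded to length $k$) is sound in spirit and close to the identity the paper uses for $n\ge 11$, but it only transports values upward and cannot supply the missing anchor.
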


\begin{proof}
Note that
\begin{align*}
	f_k\bigl(1^3 + 3^3 \cdot 4^3 +(k-3)1^3 \bigr) &= f_k\bigl(9^3 + 2^3 \cdot 5^3 + (k-3)1^3 \bigr), \\
	f_k\bigl(1^3 + 5^3 + 5^3 + (k-3)1^3 \bigr) &= f_k\bigl(2^3 + 3^3 + 2^3 \cdot 3^3 + (k-3)1^3 \bigr), \\
	f_k\bigl(1^3 + 2^3 + 2^3 \cdot 5^3 + (k-3)1^3 \bigr) &= f_k\bigl(4^3 + 2^3 \cdot 3^3 + 9^3 + (k-3)1^3 \bigr), \\
	f_k\bigl(1^3 + 1^3 + 1^3 + 2^3 \cdot 3^3 + (k-4)1^3 \bigr) &= f_k\bigl(3^3 + 4^3 + 4^3 + 4^3 + (k-4)1^3 \bigr), \\
	f_k\bigl(1^3 + 1^3 + 2^3 + 8^3 + (k-4)1^3 \bigr) &= f_k\bigl(3^3 + 3^3 + 5^3 + 7^3 + (k-4)1^3 \bigr), \\
	f_k\bigl(1^3 + 4^3 + 5^3 + 8^3 + (k-4)1^3 \bigr) &= f_k\bigl(2^3 + 2^3 + 7^3 + 7^3 + (k-4)1^3 \bigr), \\
	f_k\bigl(1^3 + 1^3 + 3^3 + 9^3 + (k-4)1^3 \bigr) &= f_k\bigl(2^3 + 4^3 + 7^3 + 7^3 + (k-4)1^3 \bigr), \\
	f_k\bigl(1^3 + 1^3 + 3^3 + 4^3 + 4^3 + (k-5)1^3 \bigr) &= f_k\bigl(2^3 + 2^3 + 2^3 + 2^3 + 5^3 + (k-5)1^3 \bigr).
\end{align*}
Then, the following \texttt{Mathematica} code yields
\begin{align*}
	&t_n = f_k(n^3) = 1\quad\text{ for }n=2,3,4,5,7,8,9
\intertext{or}
	&t_n = f_k(n^3) = n^3\quad\text{ for }n=2,3,4,5,7,8,9.
\end{align*}

\begin{verbatim}
	Solve[1 + t3 t4 == t9 + t2 t5 && 1 + t5 + t5 == t2 + t3 + t2 t3 
	 && 1 + t2 + t2 t5 == t4 + t2 t3 + t9 && 3 + t2 t3 == t3 + 3 t4 
	 && 2 + t2 + t8 == 2 t3 + t5 + t7 && 1 + t4 + t5 + t8 == 2 t2 + 2 t7 
	 && 2 + t3 + t9 == t2 + t4 + 2 t7 && 2 + t3 + 2 t4 == 4 t2 + t5, 
	{t2, t3, t4, t5, t7, t8, t9}]
\end{verbatim}

Now, we can say that either $f_k(n^3) = 1$ or $f_k(n^3) = n^3$ for $n \le 10$. 

We use induction on two cases. Assume that $n \geq 11$ and $f_k(m) = 1$ for $m \le n$. Then, $f_k(n^3) = 1$ inductively by 
\begin{align*}
	&f_k\bigl( n^3 + (n-3)^3 + (n-3)^3 + \overbrace{2^3 + \cdots + 2^3}^{(k-3)\text{ summands}} \,\bigr) \\
	&= f_k\bigl( (n-1)^3 + (n-1)^3 + (n-4)^3 + \underbrace{2^3 + \cdots + 2^3}_{(k-5)\text{ summands}} + 1^3+3^3 \bigr).
\end{align*}

Note that $6^3 = 3^3 + 4^3 + 5^3$ and $7^3 = 1^3 + 1^3 + 5^3 + 6^3$. If $N^3 = a_1^3 + a_2^3 + \dots + a_r^3$ with $a_i \geq 1$, then
\begin{align*}
	(6N)^3 
	&= (6a_1)^3 + (6a_2)^3 + \cdots + (6a_{r-1})^3 + 6^3 a_r^3 \\
	&= (6a_1)^3 + (6a_2)^3 + \cdots + (6a_{r-1})^3 + (3a_r)^3 + (4a_r)^3 + (5a_r)^3  \\
	(7N)^3 
	&= (7a_1)^3 + (7a_2)^3 + \cdots + (7a_{r-1})^3 + 7^3 a_r^3 \\
	&= (7a_1)^3 + (7a_2)^3 + \cdots + (7a_{r-1})^3 + a_r^3 + a_r^3 + (5a_r)^3 + (6a_r)^3.
\end{align*}
Thus, multiplying $6^3$ or $7^3$ repeatedly, we can find a large cubic number $N^3$ which is a sum of $k$ positive cubes $a_1^3, a_2^3, \dots, a_k^3$ starting with any cubic number.

However, then, we obtain a contradiction:
\[
	1 = f_k(N^3) = f_k(a_1^3) + f_k(a_2^3) + \dots + f_k(a_k^3) = 1 + 1 + \dots + 1 = k. 
\]
So we can exclude the case $f(n^3) = 1$.

By the equality used in the previous case, we can obtain $f_k(n^3) = n^3$ for all $n$ inductively. 
\end{proof}

Now, we prove Theorem \ref{thm:main} for $k \geq 5$. We may consider $f_k(p^r)$. If $p^r$ can be written as a sum of $k$ positive cubes, then $f(p^r) = p^r$ trivially. But, if $p^r$ is not the sum of $k$ positive cubes, we need some tricks.

As mentioned in \S4, almost all positive integers can be written as sums of four positive cubes. If $n$ is the sum of four positive cubes, then $n+(k-4)$ is a sum of $k$ positive cubes. So, almost all positive numbers can be written as sums of $k$ positive cubes. In particular, all positive integers but finitely many can be written as sums of positive $k$ cubes when $k \geq 9$, because all sufficiently large integers can be written as sums of nonnegative seven cubes \cite{Siksek} and $1072$ can be written as sums of $r$ positive cubes with $2 \le r \le 8$. Thus, we can say
\[
	\frac{\#\mathcal{E}_k(N)}{N} \to 0 
	\quad\text{ as }\quad
	N \to \infty,
\]
where $\mathcal{E}_k(N)$ denotes the set of positive integers $\le N$ which cannot be written as sums of $k$ positive cubes for $k \geq 5$. Let $\Sigma^3_k$ be the set of sums of $k$ positive cubes. Then, since
\begin{align*}
	\frac{\#\bigl( p^r ([N]_p \cap \Sigma^3_k )\bigr)}{\#[p^r N]}
	&= \frac{\#\bigl( p^r ([N]_p \setminus \mathcal{E}_k(N))\bigr)}{\#[p^r N]} \\
	&= \frac{\#\bigl( [N]_p \setminus \mathcal{E}_k(N)\bigr)}{p^r N}
	\to \frac{1}{p^r}\bigl( 1 - \frac{1}{p} \bigr) > 0
\end{align*}
as $N \to \infty$, there exists $M$ such that $p \nmid M$, $M \in \Sigma^3_k$ and $p^r M \in \Sigma^3_k$. 

Thus, $f_k(p^r) = p^r$ by
\begin{align*}
	p^r M = f_k(p^r M) = f_k(p^r)\,f_k(M) = f_k(p^r) M.
\end{align*}

The proof is completed.

\section{Commutative conditions with sums of cubes}

As a variant problem, we may consider multiplicative functions $g_k$ satisfying
\[
	g_k(x_1^3+ x_2^3 + \cdots + x_k^3) = g_k(x_1)^3 + g_k(x_2)^3 + \cdots + g_k(x_k)^3
\]
for fixed $k \geq 2$. We can solve this problem in the similar way.

\begin{theorem}
If a multiplicative function $g_2$ satisfies
\[
	g_2(a^3+b^3) = g_2(a)^3 + g_2(b)^3
\]
for $a, b \in \mathbb{N}$, then $g_2(n) = n$ unless $3 \,|\, n$ and $3^2 \nmid n$. Besides, $g_2(3) = 3, \frac{3(-1+\sqrt{-3})}{2}$, or $\frac{3(-1-\sqrt{-3})}{2}$. 
\end{theorem}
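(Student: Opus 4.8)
The plan is to mimic the structure of the proof of Theorem~\ref{thm:f2} almost verbatim, since the equation $g_2(a^3+b^3) = g_2(a)^3 + g_2(b)^3$ differs from the additive-on-cubes equation only in that the value $g_2(a^3)$ is replaced by $g_2(a)^3$. The first step is to establish the base-case values. Running essentially the same \texttt{Mathematica} solve as in Lemma~\ref{lem:f2(n)=n_for_some_n}, but with the substitutions $x_{n^3} \mapsto x_n^3$, I expect to pin down $g_2(n) = n$ for the small $n$ coprime to $3$ (i.e.\ $n = 1, 2, 4, 5, 7, 8, 13, 19, \dots$) and to find that $g_2(3)$ is constrained only by $g_2(3)^3 = 27$, whence $g_2(3) \in \{3, \omega\cdot 3, \omega^2\cdot 3\}$ with $\omega = \frac{-1+\sqrt{-3}}{2}$ a primitive cube root of unity; the stated values $\frac{3(-1\pm\sqrt{-3})}{2}$ are exactly $3\omega$ and $3\omega^2$. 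The point is that $3$ (and more generally any $n$ with $3\|n$) never appears inside the additive identity as a value $g_2(m)$ with $m^3$ a genuine argument: since $n^3 \equiv 0,\pm1 \pmod 9$, a sum of two positive cubes is never $\equiv \pm 3 \pmod 9$, so no relation ever forces $g_2(3)^3$ to anything other than $27$, nor does $g_2(3)$ itself occur linearly.

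Next I would reprove the analogues of Lemmas~\ref{lem:f2(2^r)} through \ref{lem:f2_fixes_some_patterns} and the concluding induction. Every identity used there is of the shape $f_2\big((a^3+b^3)c^{3m}\big) = f_2(c^{3m}) + f_2(b^3)f_2(c^{3m})$ or similar; the corresponding identity for $g_2$ reads
\[
	g_2\big((a^3+b^3)c^{3m}\big) = g_2(a c^m)^3 + g_2(b c^m)^3,
\]
using multiplicativity and $\gcd$ conditions exactly as before. Because all cube-arguments that occur in those identities involve only integers whose prime factors are $< p$ (or are powers of $2$, $3$, $7$), and because for such integers $\ell$ we will already know $g_2(\ell) = \ell$ (except the harmless factor of $3$, which when present satisfies $g_2(3)^3 = 27$ and so contributes $27$ just as $3^3$ would), the right-hand sides take numerically the same values as in the $f_2$ case. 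Hence every conclusion ``$f_2$ fixes $N$'' translates to ``$g_2$ fixes $N$'' for $N$ coprime to the exceptional prime $3$, and the final induction on $p \ge 5$ and on the exponent $r$ goes through unchanged, giving $g_2(p^r) = p^r$ for all primes $p \ge 5$ and $g_2(2^r) = 2^r$, $g_2(7^r)=7^r$, $g_2(3^r)=3^r$ for $r \ge 2$.

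The one genuinely new point, and the place I expect the only real friction, is bookkeeping around the prime $3$: I must check that in every identity borrowed from \S2 a lone factor of $3$ either does not occur or occurs only through a cube $3^{3j}$ (so that $g_2$ of it is $27^j = 3^{3j}$, independent of the choice of $g_2(3)$), and never through an isolated $g_2(3)$ on the additive-identity side in a way that would force $g_2(3)$ to be real. The congruence obstruction $n^3 \not\equiv \pm3 \pmod 9$ is precisely what guarantees this, and it is the same obstruction that made $f_2(3)$ free in Theorem~\ref{thm:f2}; so I expect this to be a routine but careful verification rather than a real obstacle. Conversely, to see that all three values of $g_2(3)$ actually occur, I would exhibit, for each cube root of unity $\omega$, the multiplicative function agreeing with the identity off the ``$3\,\|\,n$'' locus and sending such $n$ to $\omega n$ (equivalently $n/3 \mapsto n/3$ and the single factor $3 \mapsto 3\omega$), and check directly that it satisfies $g_2(a^3+b^3) = g_2(a)^3+g_2(b)^3$ — which it does because the $3$-part of $a^3+b^3$, of $a^3$, and of $b^3$ is always a multiple of $9$ (or trivial), so the offending factor $\omega$ never appears with an exponent indivisible by $3$ on either side.
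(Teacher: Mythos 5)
Your plan is the paper's plan --- translate the $f_2$ argument by replacing each $f_2(a^3)$ with $g_2(a)^3$ --- and most of what you say is correct, including the treatment of the prime $3$ and the constraint $g_2(3)^3=27$. But the assertion that ``the final induction on $p\ge 5$ and on the exponent $r$ goes through unchanged'' conceals a genuine gap. In the $f_2$ proof, several values $f_2(N^3)$ are \emph{determined} by making $N^3$ appear as one of the two cube summands: for instance $f_2(9\cdot 2^{3m}) = f_2\bigl((1^3+2^3)2^{3m}\bigr) = f_2(2^{3m}) + f_2(2^{3(m+1)})$ is solved for the unknown $f_2(2^{3(m+1)})$; likewise $f_2(p^{3m}(p^3+1)) = f_2(p^{3(m+1)})+f_2(p^{3m})$ yields $f_2(p^{3(m+1)})$, and $f_2(p^3+1^3)$ yields $f_2(p^3)$ in Lemma~\ref{lem:f2(p^3),f2(p^2-p+1)}. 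Under your translation the unknown in each of these becomes $g_2(2^{m+1})^3$, resp.\ $g_2(p^{m+1})^3$, resp.\ $g_2(p)^3$ --- a quantity already known from earlier stages of the induction (the exponent drops by a factor of $3$). The identity therefore becomes a tautology and gives \emph{no} information about $g_2(2^{3(m+1)})$, $g_2(3^{3(m+1)})$, $g_2(p^3)$, or $g_2(p^{3(m+1)})$; your induction stalls at every exponent divisible by $3$.

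The repair, which is exactly what the paper does, is to use replacement identities in which the target power sits on the multiplicative side of the equation rather than inside a cube summand: e.g.\ $g_2(9\cdot 2^{3(m+1)}) = g_2\bigl((2^3+4^3)2^{3m}\bigr) = g_2(2^{m+1})^3 + g_2(2^{m+2})^3$ for powers of $2$, and $g_2(2)\,g_2(p^{3m}) = g_2\bigl((p^{m})^3+(p^{m})^3\bigr) = 2\,g_2(p^{m})^3$ for $g_2(p^{3m})$ (note the paper explicitly concedes it cannot show $g_2(p^3)=p^3$ at the stage of Lemma~\ref{lem:f2(p^3),f2(p^2-p+1)} and defers all exponents $\equiv 0 \pmod 3$ to this doubling identity). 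With these substitutions inserted at the flagged points, the rest of your outline coincides with the paper's proof.
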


\begin{proof}
We obtain
\[
	g_2(n) = n \text{ for }n = 1, 2, 4, 5, 7, 8, 9, 13, 19
\]
and $g_2(3)^3 = 27$ by using the following \texttt{Mathematica} code.

\begin{verbatim}
Solve[g1 == 1 && g2 == g1^3 + g1^3 && g9 == g2^3 + g1^3
  && g4 g7 == g3^3 + g1^3 && g5 g7 == g3^3 + g2^3
  && g5 g13 == g4^3 + g1^3 && g8 g9 == g4^3 + g2^3
  && g7 g13 == g4^3 + g3^3 && g2 g9 g7 == g5^3 + g1^3
  && g7 g19 == g5^3 + g2^3 && g8 g19 == g5^3 + g3^3,
{g1, g2, g3, g4, g5, g7, g8, g9, g13, g19}]
\end{verbatim}

Note that $g_2(27) = 27$ from $g_2(2)\,g_2(27) = g_2(3)^3+g_2(3)^3$. So, $g_2(3)^3 = g_2(3^3)$, but we cannot conclude $g_2(3) = 3$. We can show that $g_2$ fixes powers of $2$ and $3$ except for $g_2(3)$ by the same method as Lemma~\ref{lem:f2(2^r)}. But, we cannot use
\[
	g_2(9 \cdot 2^{3m}) = g_2(2^{3m} + 2^{3(m+1)})
	= g_2(2^m)^3 + g_2(2^{m+1})^3
\]
to show $g_2(2^{3(m+1)}) = 2^{3(m+1)}$. Instead, we use
\begin{align*}
	g_2(9 \cdot 2^{3(m+1)}) &= g_2(9 \cdot 8 \cdot 2^{3m}) 
	= g_2\bigl( (2^3 + 4^3)\,2^{3m} \bigr) \\
	&= g_2(2^{m+1})^3 + g_2(2^{m+2})^3.
\end{align*}

Next, $g_2$ fixes $p$ and $p^2-p+1$ or $\frac{p^2-p+1}{3}$ for prime $p \geq 5$ as in Lemmas~\ref{lem:f2(p)} and \ref{lem:f2(p^3),f2(p^2-p+1)}. Here, we cannot say whether $g_2$ fixes $p^3$ or not, because the condition yields $g_2(p)^3 = p^3$ not for $g_2(p^3)$.

It can be derived similarly that 
\[
	g_2(p^2+3) = p^2+3,\qquad
	g_2(p^2+1)^3 = (p^2+1)^3,\qquad
	g_2(p^4+3) = p^4+3
\]
as in Lemma~\ref{lem:f2_fixes_some_patterns}. Then, we can conclude that $g_2(p^r) = p^r$ for $r = 3m+1$ and $r = 3m+2$ by calculating inductively
\begin{align*}
	&g_2\!\left( p^{3m} \left( \frac{p+1}{2} \right)^3 + p^{3m} \left( \frac{p-1}{2} \right)^3 \right) = g_2(p^{3m+1})\,g_2\!\left( \frac{p^2+3}{4} \right), \\
	&g_2\!\left( p^{3m} \left( \frac{p^2-1}{2} \right)^3 + p^{3m} \left( \frac{p^2+1}{2} \right)^3 \right) = g_2(p^{3m+2})\,g_2\!\left( \frac{p^4+3}{4} \right).
\end{align*}

As for $g_2(p^{3m}) = p^{3m}$, we use induction on the equality
\begin{align*}
	g_2(p^{3m} + p^{3m}) 
	&= g_2(p^m)^3 + g_2(p^m)^3 \\
	&= g_2(p^{3m})\,g_2(2) = 2 g_2(p^{3m}).
\end{align*}
\end{proof}

\begin{theorem}
Fix $k \geq 3$. If a multiplicative function $g_k$ satisfies
\[
	g_k(a_1^3 + a_2^3 + \dots + a_k^3) = g_k(a_1)^3 + g_k(a_2)^3 + \dots + g_k(a_k)^3
\]
for all $a_1, a_2, \dots, a_k \in \mathbb{N}$, then $g_k$ is the identity function.
\end{theorem}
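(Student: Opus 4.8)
The plan is to mirror the structure of the proof of Theorem~\ref{thm:main} for $k\geq 5$ (and, for $k=3,4$, the corresponding sections), adapting each step from the purely additive condition to the ``commutative'' condition. First I would run a finite \texttt{Mathematica} search, exactly as in Lemmas~\ref{lem:f3(n)=n_for_some_n}, \ref{lem:f4(2^r)}, and \ref{lem:fk(n^3)}, using the same base identities but writing $g_k(a_i)^3$ in place of $f_k(a_i^3)$ on the right-hand side. The crucial structural difference is that the condition only ever constrains $g_k(a)^3$, never $g_k(a)$ directly, so at the level of cubes the two conditions coincide: setting $G(n):=g_k(n)^3$ whenever $n$ is itself a cube, $G$ satisfies the same additive relations that $f_k$ does on cubes. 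Hence the first real goal is to show $g_k(n)^3 = n^3$ for all $n$, i.e. $g_k(n^3)=n^3$, by the identical induction used in Lemma~\ref{lem:fk(n^3)} (for $k\ge5$), Lemma~\ref{lem:f3(n^3)=n^3} and the Richmond identity (for $k=3$), or Lemma~\ref{lem:f4(n^3)} together with the density result on sums of four positive cubes (for $k=4$). The ``excluded'' branch $g_k(n^3)=1$ is ruled out by the same $6^3=3^3+4^3+5^3$ / $7^3=1^3+1^3+5^3+6^3$ scaling trick that produces the contradiction $1=k$.

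Once $g_k(n^3)=n^3$ is established for all $n$, I would upgrade this to $g_k(n)=n$ for all $n$. For a prime power $p^r$, the density argument of \S4--\S5 produces $M$ with $p\nmid M$, $M\in\Sigma^3_k$ and $p^rM\in\Sigma^3_k$; writing $p^rM = a_1^3+\cdots+a_k^3$ and $M=b_1^3+\cdots+b_k^3$ gives
\[
	g_k(p^r)^3\,g_k(M) \;=\; g_k(p^rM) \;=\; \sum_i g_k(a_i)^3 \;=\; \sum_i a_i^3 \;=\; p^rM,
\]
and similarly $g_k(M) = \sum_j b_j^3 = M$, whence $g_k(p^r)^3 = p^r M / M = p^r$. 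This only yields $g_k(p^r)^3=p^r$, i.e. $g_k(p^r)\in\{\,p^{r/3}\zeta : \zeta^3=1\,\}$ — so the remaining task is to pin down the cube root. For $k\ge3$ this is where the extra summand pays off: unlike the $k=2$ case (which genuinely admits the cube-root-of-unity ambiguity at $p=3$), with $k\geq3$ summands one can write a single value in two different ways as sums of $k$ cubes so that $g_k(p^r)$ appears \emph{linearly, not only cubed}, on one side. Concretely, from $g_k(p^r)^3 = p^r$ we know $g_k(p^r)=\alpha$ with $\alpha^3=p^r$; then evaluating $g_k\bigl(p^r + p^r + (k-2)\cdot1\bigr)$ — or an analogous identity where the left argument is a cube and the target factors through $g_k(p^r)$ multiplicatively — gives a relation of the form $g_k(p^r)\cdot(\text{known}) = (\text{known})$, forcing $\alpha = p^{r/3}$ (real positive). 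Care is needed to ensure the auxiliary numbers are themselves cubes or already-known fixed points, but this is exactly the kind of bookkeeping done in Lemmas~\ref{lem:f2(p)}--\ref{lem:f2_fixes_some_patterns}.

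The main obstacle I anticipate is precisely this last disambiguation step: the condition is structurally ``blind'' to cube roots of unity, so one must exhibit, for \emph{every} prime power $p^r$ with $p\geq5$ (the cases $p=2,3$ and small primes being handled by the initial finite computation and the powers-of-$2$, powers-of-$3$ lemmas), an identity in which $g_k$ of that prime power — or of some number sharing its ``ambiguous'' factor — enters with exponent $1$. For $p\geq 5$ one can simply combine the density statement (giving $M$, $p^rM\in\Sigma^3_k$) with a cube identity like $p^{3}\cdot m = (\text{sum of }k\text{ cubes})$ to get $g_k(p^{3})$ fixed, and then bootstrap through $r\equiv0,1,2\pmod 3$ using the same modified identities as at the end of \S2 and \S6's $k=2$ proof (now with $g_k(p^{3m})$ already a genuine integer, so that e.g. $g_k(p^{3m}(p+1)) = g_k(p^{3(m+1)})+g_k(p^{3m})$ and the multiplicative factorization together force the value). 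Since $k\geq3$ we never hit the obstruction that blocked $g_2(3)$, so the induction on $p$ closes and $g_k$ is the identity.
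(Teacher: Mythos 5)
Your overall architecture --- first force $g_k(n)^3=n^3$ for all $n$ by re-running the finite search and the inductions of \S\S3--5, then pin down $g_k$ on prime powers via Richmond's identity ($k=3$) or the density of $\Sigma^3_k$ ($k\geq4$) --- is indeed the paper's architecture. But your execution of the final step contains a concrete error that then generates a phantom obstacle. You write $g_k(p^rM)=g_k(p^r)^3\,g_k(M)$; multiplicativity gives $g_k(p^rM)=g_k(p^r)\,g_k(M)$, with $g_k(p^r)$ to the \emph{first} power. Since $g_k(M)=\sum_j g_k(b_j)^3=\sum_j b_j^3=M$ and likewise $g_k(p^rM)=p^rM$, the correct computation yields $g_k(p^r)=p^r$ outright. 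There is no cube-root-of-unity ambiguity for $k\geq3$: the functional equation always delivers $g_k$ of the target linearly, and only the summands' bases appear cubed. Your entire third paragraph is therefore solving a problem that does not arise --- and the fix you sketch there would not work anyway, since $g_k\bigl(p^r+p^r+(k-2)\cdot1\bigr)$ is an instance of the hypothesis only when $p^r$ is a cube, and even then it constrains $g_k(2p^r+k-2)$ rather than producing $g_k(p^r)$ to the first power.

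The second problem is your parenthetical ``$g_k(n)^3=n^3$ for all $n$, i.e.\ $g_k(n^3)=n^3$'': these statements are not equivalent, and the paper is explicit that only the former is available at this stage. This matters in exactly the places you wave at. For $k=3$ you cannot invoke the analogue of Lemma~\ref{lem:f3(n^3)=n^3} to conclude that the Richmond multiplier $\bigl(6\lambda^2(\lambda^3+3p^r)^2\bigr)^3$ is fixed; the paper instead observes that it has the form $(6N)^3=(3N)^3+(4N)^3+(5N)^3$, so the functional equation together with $g_3(mN)^3=(mN)^3$ fixes it. Similarly, for $k\geq5$ the degenerate branch $g_k(n)^3=1$ is not excluded by the $6^3=3^3+4^3+5^3$ scaling trick: that argument compares the branch value of $f_k$ \emph{at the cube} $N^3$ with $k$, whereas here the branch only controls $g_k(N)^3$, not $g_k(N^3)$. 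The paper kills this branch with the one-line observation $g_k(k)=k\,g_k(1)^3=k$, which contradicts $g_k(k)^3=1$. Both gaps are repairable --- and the repairs actually make the endgame shorter than what you propose --- but as written your proposal does not close.
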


The proof is similar to that of Theorem \ref{thm:main}. But, we use $g_k(n)^3 = n^3$ instead of $g_k(n^3) = n^3$.

For $k=3$, we obtain $g_3(n) = n$ for $n = 1,2,3,4,5,9,11,43$ by the \texttt{Mathematica} code, if we set $h_n = g_3(n)$:
\begin{verbatim}
Solve[h1 == 1 && h3 == h1^3 + h1^3 + h1^3 
 && h2 h5 == h2^3 + h1^3 + h1^3 && h4 h9 == h3^3 + h2^3 + h1^3 
 && h43 == h3^3 + h2^3 + h2^3 && h5 h11 == h3^3 + h3^3 + h1^3 
 && h2 h3 h11 == h4^3 + h1^3 + h1^3 && h9 h11 == h4^3 + h3^3 + h2^3 
 && h3 h43 == h4^3 + h4^3 + h1^3,
{h1, h2, h3, h4, h5, h9, h11, h43}]     
\end{verbatim}

It can be shown similarly that $g_3$ fixes $2^{3m+1}$ and $2^{3m+2}$. But, we cannot use $g_3(5 \cdot 2^{3m+1})$ as in the proof of Lemma~\ref{lem:f3(2^r),f3(3^r)} to show that $2^{3(m+1)}$ is fixed. So, instead, we use
\begin{align*}
	g_3(3 \cdot 2^{3(m+1)}) 
	&= g_3\bigl( (2^3 + 2^3 + 2^3)\,2^{3m} \bigr) \\
	&= g_3(2^{m+1})^3 + g_3(2^{m+1})^3 + g_3(2^{m+1})^3.
\end{align*}
By the same reason, we use 
\[
	g_3(8 \cdot 3^{3(m+1)}) = g_3( 6^3 \cdot 3^{3m}) = g_3\bigl( (3^3+4^3+5^3)\,3^{3m} \bigr)
\]
to show $3^{m+1}$ is fixed. This method can be also applied to Lemma~\ref{lem:f3(2^r),f3(3^r)}.

Now, we can say that $g_3(n)^3 = n^3$ for all $n \in \mathbb{N}$ by the same method of Lemma~\ref{lem:f3(n^3)=n^3}. But, we are not yet sure $g_3(n^3) = n^3$. So, we are not sure that $\bigl(6 \lambda^2 (\lambda^3+3p^r)^2 \bigr)^3$ is fixed by $g_3$ in the LHS of Richmond's representation
\[
	g_3\bigl( p^r \bigl(6 \lambda^2 (\lambda^3+3p^r)^2 \bigr)^3 \bigr)
	= g_3(p^r)\,g_3\bigl( \bigl(6 \lambda^2 (\lambda^3+3p^r)^2 \bigr)^3 \bigr).
\]

However, $g_3$ fixes $(6N)^3$, since
\begin{align*}
	g_3\bigl( (6N)^3 \bigr) 
	&= g_3\bigl( (3N)^3 + (4N)^3 + (5N)^3 \bigr) \\
	&= g_3(3N)^3 + g_3(4N)^3 + g_3(5N)^3 \\
	&= (3N)^3 + (4N)^3 + (5N)^3 = (6N)^3.
\end{align*}

So, $g_3(p^r) = p^r$ for all prime $p$ and $r \geq 1$.

For $k=4$, we obtain $g_4(n) = n$ for $n = 1,2,3,4,5,7,9,11,37$, if we set $s_n = g_4(n)$:
\begin{verbatim}
Solve[s1 == 1 && s4 == s1^3 + s1^3 + s1^3 + s1^3
 && s11 == s2^3 + s1^3 + s1^3 + s1^3
 && s2 s9 == s2^3 + s2^3 + s1^3 + s1^3
 && s2 s3 s5 == s3^3 + s1^3 + s1^3 + s1^3
 && s37 == s3^3 + s2^3 + s1^3 + s1^3
 && s4 s11 == s3^3 + s2^3 + s2^3 + s1^3
 && s9 s7 == s3^3 + s3^3 + s2^3 + s1^3
 && s2 s5 s7 == s3^3 + s3^3 + s2^3 + s2^3
 && s2 s37 == s4^3 + s2^3 + s1^3 + s1^3,
{s1, s2, s3, s4, s5, s7, s9, s11, s37}]
\end{verbatim}

Then, we can show that $g_4$ fixes $2^{3m+1}$ and $2^{3m+2}$ by the similar ways as in Lemma~\ref{lem:f4(2^r)}. Also, $g_4(2^{3(m+1)}) = 2^{3(m+1)}$ can be derived by
\begin{align*}
	g_4(7 \cdot 2^{3(m+1)})
	&= g_4(7 \cdot 2^3 \cdot 2^{3m}) = g_4\bigl( (1^3+1^3+3^3+3^3)\,2^{3m} \bigr) \\
	&= g_4(2^{m})^3 + g_4(2^{m})^3 + g_4(3 \cdot 2^{m})^3 + g_4(3 \cdot 2^{m})^3.
\end{align*}

Thus, we can conclude that $g_4(p^r) = p^r$ by the similar way as in \S4.

The proof for $k \geq 5$ is almost identical to \S5. If we set $t_n = g_k(n)^3$, $t_n = 1$ or $t_n = n^3$ for $n \le 10$ as in Lemma~\ref{lem:fk(n^3)}. Thus, we can obtain $g_k(n)^3 = 1$ or $g_k(n)^3 = n^3$ for all $n$. But, the former case is impossible by $g_k(k) = k$.

\end{document}